\documentclass[]{article}

\pdfoutput=1

\usepackage{amsfonts}
\usepackage{amsmath}
\usepackage{amsthm}
\usepackage{amssymb}
\usepackage{graphicx}
\usepackage{epstopdf}
\usepackage{verbatim}
\usepackage{xfrac}

\theoremstyle{plain}
\newtheorem{theorem}{Theorem}[section]
\newtheorem{lemma}[theorem]{Lemma}
\newtheorem{cor}[theorem]{Corollary}
\newtheorem{prop}[theorem]{Proposition}

\theoremstyle{definition}
\newtheorem{defn}[theorem]{Definition}

\DeclareMathOperator{\End}{End}

\DeclareMathOperator{\Sym}{Sym}
\DeclareMathOperator{\Id}{Id}

\DeclareMathOperator{\Hess}{Hess}
\DeclareMathOperator{\adj}{adj}

\title{Global properties of toric nearly K\"ahler manifolds}
\author{Kael Dixon}

\begin{document}

\maketitle

\begin{abstract}
	We study toric nearly K\"ahler manifolds, extending the work of Moroianu and Nagy. We give a description of the global geometry using multi-moment maps. We then investigate polynomial and radial solutions to the toric nearly K\"ahler equation.
\end{abstract}
\section{Introduction}
	A nearly K\"ahler manifold is an almost Hermitian manifold $(M,g,J)$ such that $\nabla J$ is skew symmetric: $(\nabla_X J)X =0$ for every vector field $X$ on $M$. Each of these can be decomposed as a Riemannian product of nearly K\"ahler manifolds which are either K\"ahler, $6$-dimensional, homogeneous, or twistor spaces over quaternionic K\"ahler manifolds of positive scalar curvature \cite{nagy2002nearly}. We will focus on the case of $6$-dimensional nearly K\"ahler manifolds that are \emph{strict} in the sense that they are not K\"ahler. These are characterized by being the links of metric cones with holonomy $G_2$, which makes them Einstein with positive scalar curvature \cite{bar1993real}.
	
	A main challenge is to construct complete examples of $6$-dimensional strictly nearly K\"ahler manifolds (which will be referred to simply as nearly K\"ahler manifolds in the rest of the paper). There are exactly four homogeneous examples \cite{butruille2010homogeneous}: $\mathbb S^6$, $\mathbb S^3\times\mathbb S^3$, $\mathbb{CP}^3$, and the flag manifold $\mathrm{SU}_3/\mathbb T^2$. In \cite{foscolo2017new}, cohomogeneity one examples are constructed on $\mathbb S^6$ and $\mathbb S^3\times\mathbb S^3$. No other complete examples are known. The cohomogeneity two case has been studied in \cite{madnick2017nearly}, which shows that the infinitesimal symmetry group must be $\mathfrak u(2)$.
	
	We will skip to cohomogeneity three in exchange for having an abelian symmetry group by studying nearly K\"ahler manifolds which are toric in the sense that the automorphism group contains a $3$-torus. The homogeneous nearly K\"ahler structure on $\mathbb S^3\times\mathbb S^3$ is the only known example. The general case has been studied in \cite{moroianu2018toric}, where the local theory is shown to be equivalent to a Monge-Amp\`ere type equation which we will refer to as the toric nearly K\"ahler equation.  
	This paper represents the author's efforts to build on this work. The main result gives a global description of the orbit structure in the complete case:
	
	\begin{theorem}\label{thmMain}
		Let $M$ be a complete toric nearly K\"ahler manifold with the action of a torus $\mathbb T^3$ with Lie algebra $\mathfrak t$. Then $M/\mathbb T$ is homeomorphic to $\mathbb S^3$. In particular, the multi-moment maps $$(\mu,\varepsilon):M\to\Lambda^2\mathfrak t^*\oplus\Lambda^3\mathfrak t^*\cong\mathbb R^4$$ induce an injection $$(\bar\mu,\bar\varepsilon):M/\mathbb T\hookrightarrow \Lambda^2\mathfrak t^*\oplus\Lambda^3\mathfrak t^*\cong\mathbb R^4,$$ whose image is homeomorphic to $\mathbb S^3$. The $\mathbb T$ action is free away from a finite number orbits in $\ker\varepsilon$. Moreover the two orbits in $\ker\mu$ are Lagrangian.
	\end{theorem}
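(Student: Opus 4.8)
The plan is to reduce to the compact case, realize $(\mu,\varepsilon)$ as an explicit invariant of the $\mathrm{SU}(3)$ structure contracted against the fundamental vector fields, and then show that this invariant separates orbits and cuts out a topological $3$-sphere. Since a strictly nearly K\"ahler $6$-manifold is Einstein with positive scalar curvature, completeness together with Myers' theorem forces $M$ to be compact with finite fundamental group, so $Q:=M/\mathbb{T}$ is a compact Hausdorff space. Writing $X_1,X_2,X_3$ for the fundamental vector fields of a basis of $\mathfrak t$ and using the structure equations $d\omega=3\psi^+$, $d\psi^-=-2\,\omega\wedge\omega$ (in a standard normalization) together with the Cartan formula and the fact that the $X_i$ are commuting Killing fields preserving $\omega$ and $\psi^\pm$, I would identify the components of the multi-moment maps, up to scale, as $\mu_{ij}=\omega(X_i,X_j)$ and $\varepsilon=\psi^-(X_1,X_2,X_3)$, with differentials $d\mu_{ij}=3\,\iota_{X_j}\iota_{X_i}\psi^+$ and $d\varepsilon=c\,\iota_{X_3}\iota_{X_2}\iota_{X_1}(\omega\wedge\omega)$. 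These functions are manifestly $\mathbb{T}$-invariant, hence descend to continuous $\bar\mu,\bar\varepsilon$ on $Q$.

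Next I would analyse the two distinguished strata. The orbit through $p$ degenerates precisely when $X_1\wedge X_2\wedge X_3$ vanishes at $p$, and since $\varepsilon=\psi^-(X_1,X_2,X_3)$ is alternating every such orbit lies in $\ker\varepsilon$; this gives the inclusion of the non-free locus in $\ker\varepsilon$. Examining the slice representation at a point with nontrivial (necessarily circle) stabiliser, compatibility with the $\mathrm{SU}(3)$ structure constrains the isotropy action on the $4$-dimensional normal space, which I expect to force the singular orbits to be isolated in $Q$, hence finite by compactness. For the other stratum, $\ker\mu=\{\omega(X_i,X_j)=0\}$ is exactly the locus where $\omega$ restricts to zero along the orbit, i.e.\ the Lagrangian orbits; a short computation shows $\omega^2(X_1,X_2,X_3,Y)=0$ there for all $Y$, so these orbits are also critical for $\varepsilon$ and will play the role of the two poles.

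Because the $\mu_{ij}$ and $\varepsilon$ are $\mathbb{T}$-invariant, their differentials annihilate orbit directions and therefore lie in the $3$-dimensional conormal space to a principal orbit. The core local claim is that on the free locus these four one-forms span that conormal space, so that $(\bar\mu,\bar\varepsilon)$ is a topological immersion of the principal stratum of $Q$ into $\mathbb{R}^4$; equivalently, no nonzero transverse $Y$ satisfies $\psi^+(X_i,X_j,Y)=0$ for all $i,j$ and $\omega^2(X_1,X_2,X_3,Y)=0$ simultaneously. This reduces to a pointwise linear-algebra check using the nondegeneracy relations of the $\mathrm{SU}(3)$ structure ($\psi^+\wedge\psi^-\sim\omega^3$ and the interplay of $\iota_X\psi^+$ with $\iota_{JX}\omega$), which I would carry out in an adapted frame where $\omega,\psi^\pm$ take their flat models.

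The remaining and hardest step is to upgrade this to a global homeomorphism and to identify the image. Since four invariants restrict to only three independent functions on the $3$-dimensional quotient, I expect a single algebraic relation $F(\mu,\varepsilon)=0$ among the components, expressible through the determinant-type operator $\tridet$, coming directly from the $\mathrm{SU}(3)$ algebra; I would verify that it takes the form exhibiting the image as an $S^2$-family over the interval $[\varepsilon_{\min},\varepsilon_{\max}]$ collapsing to a point at each endpoint, the endpoints being precisely the two Lagrangian orbits of $\ker\mu$. Granting injectivity, a continuous injection from the compact $Q$ is then a homeomorphism onto this $S^3$. The principal obstacle I anticipate is exactly the global orbit-separation: unlike symplectic moment maps, multi-moment maps carry no general convexity theory, so I must control the fibres by hand, most plausibly by combining the transverse integrable-system structure of $(\mu_{12},\mu_{13},\mu_{23})$ on each regular level of $\bar\varepsilon$ with the local model near the singular orbits and near the two Lagrangian orbits.
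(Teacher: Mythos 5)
Your setup (identifying $\mu_{ij}=\omega(X_i,X_j)$, $\varepsilon=\psi^-(X_1,X_2,X_3)$, their $\mathbb T$-invariance, the inclusion of the non-free locus in $\ker\varepsilon$, and the final picture of the image as a two-sphere family collapsing at the two Lagrangian points) matches the paper's, but both of the steps you flag as open are genuine gaps, and the mechanisms you propose for them would not close. First, global injectivity: there is no universal algebraic relation $F(\mu,\varepsilon)=0$ cutting out the image --- the image is the graph of $\pm\varepsilon$ over a domain $U\subset\Lambda^2\mathfrak t^*$ that depends on the particular solution $\varphi$ of the toric nearly K\"ahler equation (in the homogeneous $\mathbb S^3\times\mathbb S^3$ case its boundary is Cayley's nodal cubic, not a level set of any structure-independent polynomial), and a pointwise immersion statement on the principal stratum cannot by itself rule out a local homeomorphism failing to be injective globally. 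The missing idea is the radial monotonicity $\partial_r(\varepsilon^2)=-\tfrac83\,C(V,V)\leq 0$ in the $\mu$-coordinates (Theorem \ref{thmInTermsOfPhi}): it guarantees that the lift of any straight segment $[0,\mu(p)]$ starting at $p\in\mathring M$ cannot exit $\mathring M$ before reaching $\mu=0$ (else $\varepsilon^2$ would have to be negative along the segment), so $\mu(\mathring M_\pm)$ is star-shaped about the origin (Lemma \ref{lemStarShape}); combined with uniqueness of lifts through the single orbit over $0$ this gives injectivity of $\bar\mu$ on each component of $\mathring M/\mathbb T$, exhibits $\bar\mu$ as a double cover of $\bar U$ ramified over $\partial U$ with the sign of $\bar\varepsilon$ separating the sheets, and yields both the $\mathbb S^3$ image and the count of exactly two Lagrangian orbits over $\mu=0$. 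Note also that completeness enters only through the existence of the limit point of the lifted segment; the reduction to compactness via Myers is not needed for this theorem (the paper uses Myers only later, for Corollary \ref{corNumOrbits}).

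Second, your slice-representation argument cannot force the singular orbits to be isolated: compatibility of a circle isotropy with the ambient structure is exactly the situation of symplectic toric manifolds, where orbits with circle stabilisers form codimension-two \emph{families} (the facets of the moment polytope), so some input beyond linear algebra at a point is required. The paper gets discreteness from the cone: the Riemannian cone over $M$ is a toric $G_2$ manifold, and by \cite{madsen2018toric} its singular set maps to a graph on which the multi-moment map $\varepsilon_N=-\tfrac14 r^4\varepsilon$ is locally constant; the scaling symmetry then forces $\varepsilon_N\equiv 0$ on the graph and makes each edge a radial ray, so the singular orbits in $M$ itself form a discrete set in $\ker\varepsilon$, each with a single collapsing circle generated by $V\propto\mu^\natural$ (Corollary \ref{corSingDisc}). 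Finiteness then follows from compactness of $M/\mathbb T$, which at that stage is already known to be $\mathbb S^3$. Without some substitute for this cone argument (or another global mechanism pinning the singular set to the discrete ``antipodal'' structure on $\partial U$), your proof of the finiteness clause does not go through.
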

	
	This theorem generalizes previous work by the author in \cite{dixon2019multi}, which treats only the case of the homogeneous nearly K\"ahler structure on $\mathbb S^3\times\mathbb S^3$. In that case, $\mu(\ker\varepsilon)$ is Cayley's nodal cubic surface, whose $4$ nodal singular points correspond to the singular $\mathbb T$ orbits. 
	By studying the topological consequences of this theorem, we prove the following:
	
	\begin{cor}\label{corNumOrbits}
		Any complete toric nearly K\"ahler manifold has at least $4$ torus orbits where the action is not free. 
	\end{cor}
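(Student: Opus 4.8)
The plan is to derive the lower bound from the topology of $M$, using Theorem \ref{thmMain} to present $M$ as a $\mathbb{T}$-space over $\mathbb{S}^3$ with only finitely many singular orbits. Write $x_1,\dots,x_k\in M/\mathbb{T}\cong\mathbb{S}^3$ for the images of the non-free orbits; by the theorem these all lie in $\bar\varepsilon^{-1}(0)$ and $k<\infty$. The goal is to show $k\ge 4$. I would extract from the stabilizers a collection of integral vectors $v_1,\dots,v_k\in\mathfrak{t}_{\mathbb{Z}}\cong\mathbb{Z}^3$ and read off the bound from two facts: that the $v_i$ must generate a finite-index subgroup, and that they satisfy a nontrivial linear relation.

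First I would pin down the local structure at each $x_i$. Since $M/\mathbb{T}\cong\mathbb{S}^3$ is a manifold \emph{without} boundary, the slice theorem forbids stabilizers of rank $\ge 2$ (these would produce boundary or corners in the quotient) and forbids nontrivial finite stabilizers (these fail to give a manifold quotient). Hence each singular orbit has a circle stabilizer $H_i\subset\mathbb{T}$, determined by a primitive $v_i\in\mathfrak{t}_{\mathbb{Z}}$, and the slice representation of $H_i$ on the normal $\mathbb{C}^2$ must act freely on $\mathbb{C}^2\setminus\{0\}$ so that nearby orbits are principal; this forces weights $(1,\pm 1)$. Thus a neighborhood of the orbit is the Hopf-type model $\mathbb{T}\times_{H_i}\mathbb{C}^2$, and over the linking $2$-sphere the free part is the circle bundle of Euler number $\pm 1$ in the $v_i$-direction.

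Next I would assemble these into a global statement. Removing the singular orbits, $M^*=M\setminus\bigcup(\mathbb{T}\cdot x_i)$ is a principal $\mathbb{T}$-bundle over $Q^*=\mathbb{S}^3\setminus\{x_1,\dots,x_k\}$, and $Q^*$ is simply connected with $H_2(Q^*)\cong\mathbb{Z}^{k-1}$ generated by the linking spheres $[\mathbb{S}^2_i]$ subject to $\sum_i[\mathbb{S}^2_i]=0$. By the local model the Chern class of the bundle over $\mathbb{S}^2_i$ is $c_i=\pm v_i$, and since the bundle extends over the $3$-chain bounding the links we get $\sum_i c_i=0$ in $\mathbb{Z}^3$. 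Feeding the linking spheres through the homotopy exact sequence of $\mathbb{T}\to M^*\to Q^*$ shows $\pi_1(M^*)=\mathbb{Z}^3/\langle v_1,\dots,v_k\rangle$; van Kampen applied to gluing back the Hopf models (each of which deformation retracts onto a $2$-torus, with boundary inclusion inducing a $\pi_1$-isomorphism) leaves this unchanged, so $\pi_1(M)=\mathbb{Z}^3/\langle v_1,\dots,v_k\rangle$.

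Finally I would conclude. A nearly K\"ahler manifold is Einstein with positive scalar curvature, so by Myers $\pi_1(M)$ is finite; hence $v_1,\dots,v_k$ generate a finite-index subgroup of $\mathbb{Z}^3$ and in particular span $\mathfrak{t}$ over $\mathbb{R}$, giving $k\ge 3$. But $\sum_i c_i=0$ is a nontrivial relation among the nonzero vectors $c_i=\pm v_i$, so if $k=3$ the three vectors would be linearly dependent, contradicting that they span $\mathbb{R}^3$. Therefore $k\ge 4$. I expect the main obstacle to be the first step: rigorously ruling out the other orbit types and identifying the weight-$(1,\pm 1)$ Hopf model, since the whole count—both the spanning condition and the relation $\sum_i c_i=0$—rests on each singular orbit contributing exactly one nonzero primitive charge $v_i$.
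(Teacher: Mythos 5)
Your route is genuinely different from the paper's, and in outline it works. The paper never computes $\pi_1(M)$: it splits $M=A\cup B\cong D_i\cup D_j$ along an equatorial $\mathbb T^3$-bundle $E$ over $\mathbb S^2$ (using the star-shaped image and the antipodal pairing of singular charges to arrange that the two hemispheres separate the singular orbits and that no two orbits in one hemisphere collapse the same circle), establishes $h_1(D_i)=3-i$ for $i\in\{0,1,2\}$ and $h_1(E)=h_4(E)\in[2,3]$, and then plays Mayer--Vietoris at degrees $1$ and $4$ against $b_1(M)=0$ from Myers' theorem; the case $k=3$ is killed by the degree-$4$ sequence. Your argument instead extracts the sharper invariant $\pi_1(M)\cong\mathbb Z^3/\langle v_1,\dots,v_k\rangle$ together with the charge relation $\sum_i c_i=0$, $c_i=\pm v_i$, and finishes by linear algebra: finiteness of $\pi_1$ (Myers) forces the $v_i$ to span $\mathbb R^3$, so $k\geq 3$, and $k=3$ is impossible since three spanning vectors admit no relation $c_1+c_2+c_3=0$. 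This buys more than the paper's proof: it subsumes Proposition \ref{propNotFree} (the case $k=0$ gives $\pi_1=\mathbb Z^3$), yields an explicit presentation of $\pi_1(M)$, and the relation $\sum_i c_i=0$ is a nontrivial constraint on the singular data in its own right (consistent with the four charges at the nodes of Cayley's cubic in the $\mathbb S^3\times\mathbb S^3$ example). The global steps are all sound: $\pi_2(Q^*)\cong H_2(Q^*)\cong\mathbb Z^{k-1}$ with $\sum_i[\mathbb S^2_i]=0$, the connecting map of the fibration hitting exactly $\langle c_1,\dots,c_k\rangle\leq\pi_1(\mathbb T^3)$, and van Kampen leaving $\pi_1$ unchanged because the boundary inclusion $\mathbb T\times_{H_i}\mathbb S^3\hookrightarrow\mathbb T\times_{H_i}\mathbb C^2$ is a $\pi_1$-isomorphism (both have $\pi_1\cong\mathbb Z^3/\langle v_i\rangle$).

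The genuine weak point is exactly where you flag it, but for a different reason than you suspect: your claim that nontrivial finite stabilizers ``fail to give a manifold quotient'' is false. A finite cyclic rotation group $\mathbb Z_n$ acting on a normal slice $\mathbb R^3$ has quotient the cone on $\mathbb S^2/\mathbb Z_n\cong\mathbb S^2$, which is homeomorphic to $\mathbb R^3$ --- Seifert fibrations are the standard illustration that exceptional orbits coexist with manifold quotients --- so the topology of $M/\mathbb T\cong\mathbb S^3$ alone cannot rule out finite isotropy, and with finite isotropy present both your weight analysis and the identification $c_i=\pm v_i$ (primitive) would need modification. Fortunately you do not need to derive this from the slice theorem: Corollary \ref{corSingDisc}, imported from the Madsen--Swann toric $G_2$ theory applied to the cone over $M$, says the action is free away from a discrete set of orbits where a single circle collapses (the underlying $G_2$ results give connectedness of stabilizers), which hands you precisely the input you wanted: isolated singular orbits with connected circle stabilizers $H_i$, no finite isotropy elsewhere, and hence --- since nearby orbits are free --- the weight-$(1,\pm 1)$ Hopf model $\mathbb T\times_{H_i}\mathbb C^2$. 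With that substitution for your first step, the rest of your proof goes through as written.
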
	
As a consequence of this, radial solutions to the toric nearly K\"ahler equation cannot give complete metrics. By studying the corresponding ODE, we see that the singularity that forms must occur at the Lagrangian orbit.
	
	We also study the case when a hypothetical solution to the toric nearly K\"ahler equation is polynomial in the natural multi-moment map coordinates. The homogeneous nearly K\"ahler structure on $\mathbb S^3\times\mathbb S^3$ corresponds to a cubic solution $\varphi_0$ shown in equation (\ref{eqnCubicSoln}). Using an old theorem of Hesse \cite{lossen2004does}, we prove:
	\begin{theorem}\label{thmPolySol}
		Every polynomial solution of the toric nearly K\"ahler equation with degree at most $5$ is equivalent to the cubic solution $\varphi_0$ up to coordinate transformation.
	\end{theorem}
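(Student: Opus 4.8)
The plan is to exploit the scaling structure of the toric nearly K\"ahler equation together with the classical theorem of Hesse on identically vanishing Hessian determinants. Recall that the equation is a Monge--Amp\`ere type relation $\tridet(\Hess\varphi)=F$ in the multi-moment coordinates, where the right-hand side $F$ is built from $\varphi$ and its first derivatives, so that for a polynomial $\varphi$ it has degree at most $\deg\varphi$. Given a hypothetical polynomial solution, I would decompose it into homogeneous parts $\varphi=\varphi_d+\varphi_{d-1}+\cdots+\varphi_0$ with $\varphi_d\not\equiv 0$, and compare the top-degree terms on the two sides. The leading contribution to $\tridet(\Hess\varphi)$ is $\det(\Hess\varphi_d)$, a form of degree $3(d-2)$; since $3(d-2)>d$ exactly when $d>3$, for every $d\ge 4$ the leading term overpowers the right-hand side and the highest-order balance of the equation forces
\[
\det(\Hess\varphi_d)\equiv 0 .
\]

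This is precisely the hypothesis of Hesse's theorem. Because $\varphi_d$ is a form in only a few variables---well below the Gordan--Noether range, where vanishing Hessians first fail to characterize cones---the cited result \cite{lossen2004does} applies without exceptional cases and forces $\varphi_d$ to be a cone: after a linear change of coordinates it depends on at most two of the variables, so that $\varphi_d$ becomes a binary form and factors into linear factors. I would put $\varphi_d$ into this normal form and substitute back into the equation, reading off the resulting constraints on the lower-order parts $\varphi_{d-1},\varphi_{d-2},\dots$ one degree at a time, all the while keeping track of the residual freedom given by the linear coordinate transformations that preserve the toric structure.

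What remains is a finite computation that excludes $d=4$ and $d=5$ and classifies $d=3$. I expect the genuine difficulty to lie in degrees $4$ and $5$: Hesse's theorem only degenerates the leading homogeneous part, so ruling these out means feeding the factored normal form of $\varphi_d$ into the subleading equations and using the coordinate freedom to produce a contradiction, rather than appealing to the Hessian condition a second time; this is also where the case analysis grows and where the degree bound of the theorem originates. Once degrees $4$ and $5$ are eliminated, the cubic case is balanced and rigid---the demand that the induced metric be nondegenerate away from the singular orbits pins $\varphi$ down up to the admissible linear transformations---so that every polynomial solution of degree at most $5$ is equivalent to $\varphi_0$.
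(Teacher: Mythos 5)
Your top-degree argument for $d\ge 4$ is sound and matches the paper: the degree-$3(d-2)$ part of $\det\Hess\varphi$ is $\det\Hess\varphi_d$, and since $3(d-2)>d$ for $d\ge 4$ the equation forces $\det\Hess\varphi_d\equiv 0$, after which Hesse's theorem (valid in three variables, below the Gordan--Noether threshold) reduces $\varphi_d$ to a form in two variables. But there is a genuine gap in your treatment of the cubic case, which you dispose of with the remark that nondegeneracy of the induced metric ``pins $\varphi$ down.'' Metric nondegeneracy plays no role in the classification, and Hesse's theorem is unavailable for $d=3$, since there the equation balances: the degree-$3$ part reads $\det\Hess\varphi^3=\tfrac23\varphi^3$, i.e.\ the Hessian determinant is proportional to the cubic itself, not identically zero. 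The paper settles this with a different classical input: a plane cubic whose Hessian is proportional to itself is a union of lines, whence $\varphi^3$ factors into linear forms; one then chooses coordinates with $\varphi^3=\lambda\mu_1\mu_2\mu_3$, computes $\lambda^2=\tfrac13$, and uses the degree $2$, $1$, $0$ terms of the equation to force $\varphi^2=\sum_j\mu_j^2$ (after showing the off-diagonal part of $\nabla^2\varphi^2$ vanishes and rescaling), $\varphi^1=0$, and $\varphi^0=3$. Without this step your proposal proves nothing in the one degree where a solution actually exists.

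Two smaller inaccuracies concern the elimination of $d=4,5$. First, over $\mathbb R$ a binary quartic or quintic need not split into linear factors (e.g.\ $x^4+y^4$), so the ``factored normal form'' you propose is not available; what Hesse's theorem actually yields, and what the paper uses, is only independence of one variable after a linear change. Second, your expectation that one can avoid ``appealing to the Hessian condition a second time'' is contrary to how the contradiction is actually produced: the paper's exclusion of quartics and quintics reapplies Hesse's theorem repeatedly to subleading homogeneous pieces (for instance $|\nabla^2_{xy}\varphi^4|=0$ forces $\varphi^4=\varphi^4(x)$, and pieces such as $B^3$, $C^2$, $C^4$, and $\beta^3$ are normalized the same way), interleaved with coefficient extraction degree by degree; the quintic case in particular is a long cascade of such reductions ($\varphi^4_{zz}=0$, $\varphi_{zzz}=0$, $\gamma=1$, $b=0$), not a routine finite check. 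So your skeleton is the right one for $d\ge 4$, but the cubic case needs the union-of-lines theorem rather than a nondegeneracy appeal, and the subleading analysis, where nearly all the work lives, relies on exactly the repeated use of Hesse's theorem that you propose to avoid.
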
	
	The toric nearly K\"ahler equation restricted to the space of polynomials is overdetermined for polynomials of degree greater than three, so it is unlikely that there will be other polynomial solutions. However, to show this explicitly is computationally difficult even in the quintic case. 

\subsection{Acknowledgements}
	This research was funded by the Simons Collaboration on Special Holonomy in Geometry and Physics (\#488635
Simon Salamon). I am very grateful for many interesting and productive discussions about this work with others, especially Bobby Acharya, Lorenzo Foscolo, Andrei Moroianu, and Simon Salamon.

\section{Local theory}\label{secLoc}

In this section we review the local theory of toric nearly K\"ahler manifolds from \cite{moroianu2018toric}, although we will use a coordinate invariant treatment 
in order to make clear the invariance properties of the expressions.

First we introduce $\mathrm{SU}(3)$ structures, which are a convenient framework for studying nearly K\"ahler manifolds:

\begin{defn}
	An $\mathrm{SU}(3)$ structure $(\omega,\psi = \psi^+ + i \psi^-)$ on a $6$-manifold $M$ is a pair of forms $\omega\in \Omega^2(M)$ and $\psi\in\Omega^3(M,\mathbb C)$ satisfying $$2\omega^3 = 3\psi^+\wedge \psi^-, \qquad \omega\wedge\psi = 0.$$
\end{defn}
We will refer to these equations as the \emph{$SU(3)$ structure equations}.
\begin{theorem}[\cite{carrion1999survey}]
	A nearly K\"ahler structure is equivalent to an $\mathrm{SU}(3)$ structure $(\omega,\psi^+)$ satisfying
	$$d\omega = 3\psi^+, \qquad d\psi^- = -2\omega\wedge\omega.$$
\end{theorem}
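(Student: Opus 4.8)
The plan is to prove the equivalence by reducing it, in both directions, to a statement about the intrinsic torsion of the $\mathrm{SU}(3)$ structure, organized by the Gray--Hervella classification and its $\mathrm{SU}(3)$ refinement due to Chiossi and Salamon. The first step is to fix the dictionary relating the two packages of data. An $\mathrm{SU}(3)$ structure $(\omega,\psi^+)$ already determines an almost Hermitian structure: by Hitchin's theory of stable forms the $3$-form $\psi^+$ alone determines an almost complex structure $J$, and the structure equations $2\omega^3 = 3\psi^+\wedge\psi^-$ and $\omega\wedge\psi = 0$ guarantee that $g(X,Y):=\omega(X,JY)$ is a $J$-Hermitian metric whose fundamental form is $\omega$ and for which $\psi=\psi^++i\psi^-$ is a unit $(3,0)$-form. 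Conversely, a strict nearly K\"ahler $(g,J)$ of dimension six is the link of a cone with holonomy $G_2$ \cite{bar1993real}; its canonical Hermitian connection has totally skew torsion and holonomy in $\mathrm{SU}(3)$, so parallel transport of the complex volume form yields a global $(3,0)$-form and hence an $\mathrm{SU}(3)$ structure with fundamental form $\omega(X,Y)=g(JX,Y)$. Both sides therefore refine the same almost Hermitian geometry, and only the first-order conditions remain to be reconciled.

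The heart of the matter is to expand $d\omega$, $d\psi^+$ and $d\psi^-$ in terms of the five torsion modules $W_1,\dots,W_5$ (the scalar class, a primitive $(1,1)$-form $W_2$, the primitive $(2,1)+(1,2)$ part $W_3$, and two $1$-forms $W_4,W_5$). Keeping only the scalar part and writing it as two real functions $w^+,w^-$, the expansions take the schematic form
$$d\omega = \tfrac{3}{2}\big(w^+\psi^+ + w^-\psi^-\big) + W_4\wedge\omega + W_3,$$
$$d\psi^+ = w^-\,\omega\wedge\omega + W_2^+\wedge\omega + W_5\wedge\psi^+, \qquad d\psi^- = -\,w^+\,\omega\wedge\omega + W_2^-\wedge\omega + W_5\wedge\psi^-,$$
where the cross-pairing of $w^\pm$ between $d\omega$ and $d\psi^\mp$ is forced by differentiating the structure equations: for instance $0 = d(\omega\wedge\psi^-) = d\omega\wedge\psi^- + \omega\wedge d\psi^-$, together with $\psi^+\wedge\psi^- = \tfrac{2}{3}\omega^3$, relates the coefficient of $\psi^+$ in $d\omega$ to that of $\omega^2$ in $d\psi^-$. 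The content to verify is that the nearly K\"ahler condition $(\nabla_X J)X = 0$ is exactly the vanishing of every class except $w^+$, with $w^+$ constant; this is the representation-theoretic translation of $\nabla J$ being totally skew. Granting it, strictness gives $w^+\neq 0$ and rescaling the metric normalizes $w^+ = 2$, whereupon the expansions collapse to $d\omega = 3\psi^+$, $d\psi^+=0$ and $d\psi^- = -2\,\omega\wedge\omega$. Conversely, imposing the two stated equations directly forces $W_2=W_3=W_4=W_5=0$, $w^-=0$ and $w^+\equiv 2$, hence pure scalar torsion and the nearly K\"ahler condition.

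The step I expect to be the main obstacle is precisely this middle translation between the tensorial condition $(\nabla_X J)X=0$ and the vanishing of the four non-scalar torsion classes. Carrying it out cleanly requires recovering $\nabla\omega$ and $\nabla\psi$ from the differentials via the Levi-Civita connection, decomposing each into the irreducible $\mathrm{SU}(3)$-modules, and matching pieces; the genuine care lies in keeping the reality and sign conventions consistent, so that the surviving terms are $+3\psi^+$ and $-2\omega^2$ rather than one of their sign- or $\psi^+\!\leftrightarrow\!\psi^-$-swapped variants. The remaining verifications --- nondegeneracy of the Hitchin reconstruction of $(g,J)$ and preservation of the $\mathrm{SU}(3)$ structure equations under $d$ --- are routine once the torsion identification is in place.
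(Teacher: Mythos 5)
This statement is imported by citation in the paper (it is attributed to the survey of Reyes Carri\'on and Salamon) and no proof appears in the text, so there is no in-paper argument to compare against; your proposal is, however, essentially the standard proof from that literature: identify the two data packages, expand $d\omega$, $d\psi^+$, $d\psi^-$ in the Gray--Hervella/Chiossi--Salamon torsion modules $W_1^\pm,\dots,W_5$, and match the nearly K\"ahler condition with pure scalar torsion. The outline is correct, and the step you flag as the main obstacle is exactly the classical content (Gray--Hervella for $(\nabla_X J)X=0\Leftrightarrow$ torsion in $W_1$, Chiossi--Salamon for the fact that $d\omega$ and $d\psi^\pm$ detect the full $\mathrm{SU}(3)$ torsion), so it is fillable as stated. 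Two refinements would tighten it. First, in the converse direction the equations $d\omega=3\psi^+$ and $d\psi^-=-2\,\omega\wedge\omega$ do not by themselves see $W_2^+$; you need $d\psi^+=\tfrac13 d^2\omega=0$, which is free but should be said explicitly (the same trick, $0=d^2\omega=3\,dw^+\wedge\psi^+$ in the pure-$W_1$ case, also proves the constancy of $w^+$ that you assert, since $\alpha\wedge\psi^+$ and $\omega\wedge\omega$ lie in different modules of $\Lambda^4$). Second, in the forward direction there is a phase ambiguity $\psi\mapsto e^{i\theta}\psi$ that rotates $(w^+,w^-)$ and shifts $W_5$ by (a multiple of) $d\theta$, so ``the nearly K\"ahler condition is exactly the vanishing of everything but $w^+$'' holds only after fixing the phase; the cleanest fix is to bypass the cone/canonical-connection detour and define $\psi^+$ directly from the torsion, $\psi^+(X,Y,Z)\propto g\big((\nabla_X J)Y,Z\big)$, which is totally skew of type $(3,0)+(0,3)$ with constant norm and pins down the normalization giving $d\omega=3\psi^+$, $d\psi^-=-2\,\omega\wedge\omega$ rather than a sign- or phase-rotated variant.
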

We will refer to these equations as the \emph{nearly K\"ahler structure equations}.
\begin{defn}
	A \emph{toric nearly K\"ahler manifold} $(M,\omega,\psi,\mathbb T)$ is a $6$-manifold $M$ equipped with a nearly K\"ahler structure $\mathrm{SU}(3)$ structure $(\omega, \psi = \psi^++i\psi^-)$ which is invariant under the effective action of a $3$-torus $\mathbb T$. We will denote by $(g,J)$ the associated Hermitian structure.
\end{defn}

Let $\mathbb K:\mathfrak t\to \Gamma(TM)$ be the linear map which sends elements of the Lie algebra $\mathfrak t$ of $\mathbb T$ to the vector field generating the corresponding action. The exact forms $d\omega$ and $d\psi^-$ are $\mathbb T$-invariant, so by \cite{madsen2013closed} admit natural multi-moment maps
$$\mu:=\omega\circ(\Lambda^2\mathbb K):M\to\Lambda^2\mathfrak t^*, 
\qquad \varepsilon:=\psi^-\circ(\Lambda^3\mathbb K):M\to\Lambda^3\mathfrak t^*.$$

Define $\theta\in\Omega^1(M,\mathfrak t)$ such that $\theta\circ\mathbb K = \Id$ and $\theta|_{\mathfrak t_{M}^\perp}=0.$
Define $$\gamma:=J\theta = -\theta\circ J\in\Omega^1(M,\mathfrak t)$$
and $\mathring M:=M\backslash\ker\varepsilon.$  Since $\psi$ is a $(3,0)$-form, $\mathring M$ is also described as the set of points where the image $\mathfrak t_M$ of $\mathbb K$ intersects transversally with $J\mathfrak t_M$.
This allows us to write $T\mathring M = \mathfrak t_M\oplus J\mathfrak t_M$ with frame $(\theta,\gamma)\in\Omega^1(\mathfrak t\oplus\mathfrak t)$.

Since $\theta+i\gamma$ gives a framing of $\Lambda^{(1,0)}\mathring M$, we can write $\psi = i\varepsilon \Lambda^3 (\theta+i\gamma)$, so that
$$\psi^+ = \varepsilon(\Lambda^3\gamma-\gamma\wedge\Lambda^2 \theta),
\qquad \psi^- = \varepsilon(\Lambda^3\theta-\theta\wedge\Lambda^2\gamma).$$
	Similarly, the rest of the structures can be given in terms of the multi-moment maps $(\mu,\varepsilon)$, the frame $(\theta,\gamma)$, and a matrix $$C=g\circ(\odot^2\mathbb K):M\to\Sym^2\mathfrak t^*.$$ For example,
	$$\omega = \mu(\Lambda^2\theta +\Lambda^2\gamma) + \theta\wedge \vec c,$$
	where $\vec c = \gamma\lrcorner C \in \Omega^1(\mathring M,\mathfrak t^*).$

\begin{lemma}[\cite{moroianu2018toric}]\label{lemNKEqns}
	Using this framework, the $\mathrm{SU}(3)$ structure equations are equivalent to 
	\begin{equation}\label{eqnToricSU3}
		\det C = \varepsilon^2 + C(V,V),
	\end{equation}
	while the nearly K\"ahler structure equations are equivalent to 
	$$d(\varepsilon\vec c) = 0,
	\qquad \tfrac14\varepsilon d\theta = \vec c\wedge\vec c - \mu \mu\cdot(\gamma\wedge\gamma),$$
	where $V\in \Gamma(\mathring M,\Lambda^3\mathfrak t^*\otimes \mathfrak t)$ is the element corresponding to $\mu$ via the natural isomorphism $\natural:\Lambda^3\mathfrak t^*\otimes\mathfrak t\cong \Lambda^2\mathfrak t^*$.
\end{lemma}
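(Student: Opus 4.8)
The plan is to treat the lemma as a direct computation in the coframe $(\theta,\gamma)$, which spans $T^*\mathring M$ pointwise, and to read off the stated equations by matching coefficients of the coframe monomials. Two preliminary identities will do most of the work. First, since $\theta\circ\mathbb K=\Id$ is $\mathbb T$-invariant, Cartan's formula and the abelian bracket relations give $\iota_{K}\,d\theta=0$ for every generating vector field $K$, so $d\theta$ is horizontal and expands purely in the $\gamma$-directions. Second, the same invariance applied to $\omega$ and $\psi^-$ yields the multi-moment identities $d\mu=(\Lambda^2\mathbb K)\lrcorner\,d\omega$ and $d\varepsilon=(\Lambda^3\mathbb K)\lrcorner\,d\psi^-$, which convert the differential content of the nearly K\"ahler equations into algebraic conditions on $(\mu,\varepsilon,\vec c)$.

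For the $\mathrm{SU}(3)$ structure equations I would first note that $\omega$ is of type $(1,1)$ and $\psi$ of type $(3,0)$ with respect to $J$. The former follows from the symmetry of $C$: in the expansion $\theta\wedge\vec c=\sum_{jk}C_{jk}\,\theta^k\wedge\gamma^j$ the $(2,0)$ and $(0,2)$ parts vanish because $C_{jk}$ is symmetric while $(\theta^k+i\gamma^k)\wedge(\theta^j+i\gamma^j)$ is antisymmetric in $j,k$. Consequently $\omega\wedge\psi$ has type $(4,1)$ and is forced to vanish on a complex threefold, so the equation $\omega\wedge\psi=0$ carries no content. The whole content is the normalization $2\omega^3=3\psi^+\wedge\psi^-$, which I would verify by expanding both sides as multiples of the volume form $\nu:=\Lambda^3\theta\wedge\Lambda^3\gamma$. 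Using $\psi^+\wedge\psi^-=\tfrac{i}{2}\,\psi\wedge\bar\psi$ and the framing $\psi=i\varepsilon\,\Lambda^3(\theta+i\gamma)$ presents the right-hand side as a fixed multiple of $\varepsilon^2\nu$. Cubing $\omega=\mu(\Lambda^2\theta+\Lambda^2\gamma)+\theta\wedge\vec c$ and collecting the $\nu$-coefficient, the contribution quadratic in $\mu$ assembles, through the isomorphism $\natural$ sending $\mu$ to $V$, into $C(V,V)$, while the $\theta\wedge\vec c$ contribution with $\vec c=\gamma\lrcorner C$ produces $\det C$; equating coefficients gives (\ref{eqnToricSU3}).

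For the nearly K\"ahler equations I would differentiate the structural expressions directly. Writing $d\omega$ from $\omega=\mu(\Lambda^2\theta+\Lambda^2\gamma)+\theta\wedge\vec c$, substituting $d\mu$ and $d\varepsilon$ from the multi-moment identities, and using the horizontality of $d\theta$, I would impose $d\omega=3\psi^+$ and match against the coframe monomials. The components containing a $\theta$-factor consolidate into the closedness statement $d(\varepsilon\vec c)=0$, while the purely horizontal components, which carry the curvature $d\theta$, produce $\tfrac14\varepsilon\,d\theta=\vec c\wedge\vec c-\mu\mu\cdot(\gamma\wedge\gamma)$. Finally I would check that the remaining equation $d\psi^-=-2\,\omega\wedge\omega$ yields nothing new: after substituting the framings of $\psi^-$ and $\omega$, its coframe components reduce, modulo the normalization (\ref{eqnToricSU3}), to the two equations already obtained.

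The main obstacle is the bookkeeping forced by the non-orthogonality of $\mathfrak t_M$ and $J\mathfrak t_M$: since $g(JK_j,K_k)=\mu(e_j\wedge e_k)$ need not vanish, evaluating $\theta$ and $\gamma$ on the generators mixes $\mu$ with $C$, so the differential of $\gamma=J\theta$ and the contractions $\iota_K\psi^+$, $\iota_K(\omega\wedge\omega)$ must be tracked carefully in terms of $(\mu,\varepsilon,C)$. Keeping the tensorial types consistent throughout---$\mu\in\Lambda^2\mathfrak t^*$, $\varepsilon\in\Lambda^3\mathfrak t^*$, $C\in\Sym^2\mathfrak t^*$, $\vec c\in\Omega^1\otimes\mathfrak t^*$---and correctly identifying the determinant term $C(V,V)$ and the quadratic term $\mu\mu\cdot(\gamma\wedge\gamma)$ through $\natural$ is where the genuine care is required; the remainder is routine exterior algebra.
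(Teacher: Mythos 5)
Two remarks before the substance: the paper itself does not prove this lemma --- it is quoted from \cite{moroianu2018toric} --- so the baseline for comparison is that reference, whose argument is exactly the kind of direct frame computation you propose. Your architecture is the right one and matches it: the symmetry of $C$ does force $\theta\wedge\vec c=\sum_{j,k}C_{jk}\,\theta^j\wedge\gamma^k$ to be of type $(1,1)$, so $\omega\wedge\psi$ has type $(4,1)$ and vanishes identically; and in $\omega^3$ the only terms of bidegree $(3,3)$ in $(\theta,\gamma)$ are $(\theta\wedge\vec c)^3$ and $\big(\mu\,\Lambda^2\theta\big)\wedge\big(\mu\,\Lambda^2\gamma\big)\wedge(\theta\wedge\vec c)$, which produce exactly $\det C$ and the $C(V,V)$ term, so the normalization $2\omega^3=3\psi^+\wedge\psi^-$ collapses to \eqref{eqnToricSU3} as you say.

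There is, however, one genuinely broken step. You infer from $\iota_K d\theta=0$ (which is correct: $\theta$ is invariant and $\iota_K\theta$ is constant) that ``$d\theta$ expands purely in the $\gamma$-directions.'' This implication fails, because $(\theta,\gamma)$ is \emph{not} dual to the frame $(\mathbb K, J\mathbb K)$: as you yourself note, $g(JK_j,K_k)=\mu_{jk}\neq 0$ in general, and this translates into $\iota_{K_i}\gamma^j=\pm\,(C^{-1}\mu)^j{}_i$, i.e.\ the contraction of $\gamma$ with the generators is the endomorphism $j=C^{-1}\mu$ appearing in the proof of lemma \ref{lemEpsDomVol}, nonzero wherever $\mu\neq 0$. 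Consequently the horizontal $2$-forms are spanned by wedges of the corrected coframe $\gamma^j\mp(C^{-1}\mu)^j{}_i\,\theta^i$, not of the $\gamma^j$ alone; the fact that $\varepsilon\, d\theta$ is nonetheless purely of $\gamma\wedge\gamma$ type is an \emph{output} of the nearly K\"ahler equations together with the algebraic identities for $j$ (indeed, horizontality of the stated right-hand side requires the identity $\iota_{K_i}\big(\vec c\wedge\vec c-\mu\,\mu\cdot(\gamma\wedge\gamma)\big)=0$, which encodes precisely the identity $j^2=\frac{-C(V,V)\Id+(CV)\otimes V}{\det C}$ computed in lemma \ref{lemEpsDomVol}), not a formal consequence of invariance. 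This matters for your strategy: you split $d\omega=3\psi^+$ into ``components with a $\theta$-factor'' (giving $d(\varepsilon\vec c)=0$) versus ``purely horizontal components'' (giving the $d\theta$ equation), and with the faulty identification you would silently discard the $\theta\wedge\theta$- and $\theta\wedge\gamma$-components of $d\theta$, which must instead be shown to cancel. The repair is mechanical but necessary: expand $d\theta$ in the full basis, record $\iota_K d\theta=0$ as linear relations among its coefficients using $\iota_{K_i}\gamma^j=\pm(C^{-1}\mu)^j{}_i$, and only then match coframe monomials; with that correction, the rest of your outline (including the check that $d\psi^-=-2\,\omega\wedge\omega$ adds nothing new modulo \eqref{eqnToricSU3}) goes through along the lines of the cited proof.
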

Here, by $\det C\in \Gamma\big(\mathring M,(\Lambda^3\mathfrak t^*)^{\otimes 2}\big)$, we mean the square of the volume form on $\mathfrak t$ induced by $C$. This agrees with the usual determinant in coordinates.

Since the functions $\varepsilon$, $V$, and $C$ are $\mathbb T$-invariant, they descend to $\mathring M/\mathbb T$, which can be locally identified with $\Lambda^2\mathfrak t^*$ via $\mu$. Since we can think of $\mu$ as giving coordinates on $\Lambda^2\mathfrak t^*$, we can think of $\varepsilon$, $V$,  and $C$ as functions locally given in these coordinates on some $U\subset \Lambda^2\mathfrak t^*$.

These coordinates allow explicit computations of several expressions in terms of a potential function:

\begin{theorem}[\cite{moroianu2018toric}]\label{thmInTermsOfPhi}
	There exists a function $\varphi:U\to(\Lambda^3\mathfrak t^*)^2$ whose Hessian in $\mu$ coordinates is $C$. We also have
	$$\varepsilon^2 = \tfrac83(1-\partial_r)\varphi,
	\qquad C(V,V) = (\partial_r^2-\partial_r)\varphi,$$
	where $\partial_r$ is the Euler vector field for $\Lambda^2\mathfrak t^*$ (so that in coordinates $\partial_r = \mu^i\partial_{\mu_i}$).
\end{theorem}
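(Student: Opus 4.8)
The plan is to work entirely on the quotient $\mathring M/\mathbb T$, which $\mu$ identifies (locally, after shrinking to a contractible $U$) with an open subset of $\Lambda^2\mathfrak t^*$, and to treat $\varepsilon$, $C$, $V$, $\vec c$ as functions of the coordinates $\mu^i$. I would first separate the three assertions. The identity $C(V,V) = (\partial_r^2-\partial_r)\varphi$ is a formal consequence of $\Hess\varphi = C$ and Euler's relation: if $\partial_{\mu_i}\partial_{\mu_j}\varphi = C_{ij}$ then $\partial_r^2\varphi = \partial_r\varphi + C_{ij}\mu^i\mu^j$, and since the coordinates $\mu^i$ are exactly the components of $V$ under $\natural$, the quadratic term is $C(V,V)$ (the weights in $(\Lambda^3\mathfrak t^*)^2$ match on both sides). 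Likewise, for any primitive with $\Hess\varphi = C$ one computes $\partial_{\mu_i}\big[(1-\partial_r)\varphi\big] = -C_{ij}\mu^j = -C(V,\cdot)_i$; since $(1-\partial_r)$ annihilates the linear part of $\varphi$ and fixes constants, $(1-\partial_r)\varphi$ is determined by $C$ up to an additive constant. Hence the theorem reduces to: (i) that $C$ admits a Hessian potential at all, and (ii) the gradient identity $d(\varepsilon^2) = -\tfrac83\,C(V,\cdot)$, after which the still-free constant in $\varphi$ is chosen to force $\varepsilon^2 = \tfrac83(1-\partial_r)\varphi$ exactly.

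For (i), existence of $\varphi$ on the contractible $U$ is equivalent to the Codazzi-type integrability condition that $\partial_{\mu_i}C_{jk}$ be symmetric in $i$ and $j$ (apply the Poincar\'e lemma twice). To produce this I would first express the coordinate differentials in the moving frame: since $\mu$ is the multi-moment map for $d\omega$, invariance and commutativity of the $\mathbb K$'s give $d\mu(X,Y) = \iota_{\mathbb K_Y}\iota_{\mathbb K_X}d\omega = 3\,\iota_{\mathbb K_Y}\iota_{\mathbb K_X}\psi^+$, which I would expand using $\psi^+ = \varepsilon(\Lambda^3\gamma - \gamma\wedge\Lambda^2\theta)$ and $\omega = \mu(\Lambda^2\theta+\Lambda^2\gamma)+\theta\wedge\vec c$. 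This should exhibit $\vec c = \gamma\lrcorner C$ as the horizontal part of $d\mu$ weighted by $C$, i.e. identify $\varepsilon\vec c$ with a $\mathfrak t^*$-valued one-form on $U$ built from the coordinate derivatives. Feeding this into the nearly K\"ahler equation $d(\varepsilon\vec c)=0$ from Lemma \ref{lemNKEqns} should then collapse to precisely the symmetry of $\partial_\mu C$, producing $\varphi$.

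For (ii), I would compute $d\varepsilon$ from the remaining nearly K\"ahler equation $d\psi^- = -2\,\omega\wedge\omega$, so that $d\varepsilon = -2\,(\Lambda^3\mathbb K)\lrcorner(\omega\wedge\omega)$, and again substitute $\omega = \mu(\Lambda^2\theta+\Lambda^2\gamma)+\theta\wedge\vec c$. After projecting to $U$ and multiplying by $2\varepsilon$, the contraction should reduce to $d(\varepsilon^2) = -\tfrac83\,C(V,\cdot)$; the factor $\tfrac83$ and the appearance of $C(V,\cdot)$ arise from pairing the $\mu(\Lambda^2\theta+\Lambda^2\gamma)$ and $\theta\wedge\vec c$ pieces of the two copies of $\omega$ and using $\vec c = \gamma\lrcorner C$ together with $\natural$. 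Comparing with the formula $\partial_{\mu_i}\big[(1-\partial_r)\varphi\big] = -C_{ij}\mu^j$ from the first paragraph shows $\varepsilon^2 - \tfrac83(1-\partial_r)\varphi$ is locally constant, and absorbing that constant into the constant term of $\varphi$ finishes the argument (the $C(V,V)$ identity is unaffected since $\partial_r^2-\partial_r$ kills affine terms).

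The main obstacle I anticipate is the bookkeeping in translating between the exterior derivative on $M$ and the coordinate derivatives $\partial_{\mu_i}$ on the quotient. Because $\mathfrak t_M$ and $J\mathfrak t_M$ are transverse but not orthogonal once $\mu\neq 0$, the coframe $(\theta,\gamma)$ is not simply dual to $\mathbb K$, so the contractions $\iota_{\mathbb K_X}\gamma$ carry off-diagonal data encoded by $\mu$ and $C$; keeping these frame identities straight, along with the various weights in powers of $\Lambda^3\mathfrak t^*$, is where the genuine computation lies. Once $d\mu$ and $d\varepsilon$ are written cleanly in the frame, both the integrability condition in (i) and the gradient identity in (ii) should fall out by wedging against the nearly K\"ahler structure equations.
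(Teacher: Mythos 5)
Your outline is correct and takes essentially the same route as the proof in \cite{moroianu2018toric} (which this paper quotes without reproducing): there, contracting $d\omega = 3\psi^+$ with two Killing fields gives $d\mu = -4\varepsilon\gamma$, which turns the equation $d(\varepsilon\vec c)=0$ of lemma \ref{lemNKEqns} into the symmetry of $\partial_\mu C$ producing the potential, while contracting $d\psi^- = -2\,\omega\wedge\omega$ with three Killing fields yields exactly your gradient identity $d(\varepsilon^2) = -\tfrac83\,C(V,\cdot)$, with the Euler-operator reductions and the absorption of the additive constant handled just as you describe. The frame bookkeeping you flag (in particular $\gamma(\mathbb K_X)\neq 0$ when $\mu\neq 0$, with $\gamma\circ\mathbb K$ governed by $j=C^{-1}\mu$) is indeed where the computation lives, and it closes as you anticipate.
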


Combining this with equation (\ref{eqnToricSU3})  gives the Monge-Amp\`ere type equation
\begin{equation}\label{eqnNearlyToric}
	\det\Hess\varphi = \left(\tfrac 83-\tfrac{11}3\partial_r+\partial_r^2\right)\varphi,\tag{$\star$}
\end{equation}
which we will refer to as the \emph{toric nearly K\"ahler equation} or just \eqref{eqnNearlyToric}.

Note that with respect to the frame $(\theta,\gamma)$, $g$ is represented by the matrix
$$D:=\begin{pmatrix}
	\Hess\varphi & -\mu \\ \mu & \Hess\varphi
	\end{pmatrix}\in\Gamma\big(U,\mathrm{Sym}^2(\mathfrak t\oplus\mathfrak t)^*\big),$$
where $\mu\in\Gamma(U,\Lambda^2\mathfrak t^*)$ is the inclusion (identity) map.

The above theorem has a partial converse:

\begin{theorem}[\cite{moroianu2018toric}]\label{thmMNInverseConstruction}
	Every solution of the toric nearly K\"ahler equation on some open set $U$ of $\Lambda^2\mathfrak t^*$ defines in a canonical way a nearly K\"ahler structure with $3$ linearly independent commuting infinitesimal automorphisms on $U_0\times\mathbb T^3$, where $$U_0 = \left\{x\in U: (1-\partial_r)\varphi> 0 \text{ and } D \text{ is positive definite}\right\}.$$
\end{theorem}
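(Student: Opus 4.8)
The plan is to invert the forward construction summarized above: starting from a solution $\varphi$ of \eqref{eqnNearlyToric} on $U$, I would manufacture all the tensorial data on the base, assemble the forms $\omega,\psi^\pm$ and the metric $g$ on the product $U_0\times\mathbb T^3$ using the explicit formulas of the previous pages, and then check that the two defining conditions of $U_0$ together with \eqref{eqnNearlyToric} force these forms to satisfy first the $\mathrm{SU}(3)$ structure equations and then the nearly K\"ahler structure equations. Concretely, on $U_0$ I set $C:=\Hess\varphi$, let $\mu$ be the identity inclusion $U_0\hookrightarrow\Lambda^2\mathfrak t^*$ and $V:=\natural^{-1}\mu$, and define $\varepsilon:=\sqrt{\tfrac83(1-\partial_r)\varphi}$. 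The first condition $(1-\partial_r)\varphi>0$ is exactly what makes $\varepsilon$ real and nowhere zero, so that $\psi$ is a genuine non-degenerate $(3,0)$-form; the second condition, positivity of $D$, is what makes the symmetric form built from $\Hess\varphi$ and $\mu$ an honest Riemannian metric and $(\theta,\gamma)$ an honest coframe.

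The underlying manifold is the trivial bundle $U_0\times\mathbb T^3$ with $\mathbb T^3$ acting by translation, so that $\mathbb K$ and the vertical part of $\theta$ are canonical; the only freedom is the choice of connection, i.e.\ a $\mathfrak t$-valued $1$-form $A$ on $U_0$ with $\theta=dt+A$, where $dt$ is the $\mathfrak t$-valued Maurer--Cartan form. The horizontal form $\gamma$ I would not obtain from $J$ (which is not yet available), but define directly by inverting $\vec c=\gamma\lrcorner C$ against the moment-map identity $\varepsilon\vec c=d\mu$ (valid up to sign), i.e.\ $\gamma:=\varepsilon^{-1}C^{-1}(d\mu)$, which is legitimate since $C$ is invertible on $U_0$. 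One then declares $J\theta=\gamma$ and $J\gamma=-\theta$ to get an almost complex structure, sets $g:=D$, and defines $\omega=\mu(\Lambda^2\theta+\Lambda^2\gamma)+\theta\wedge\vec c$ together with $\psi^\pm$ by the stated formulas.

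With the data in place, verifying the $\mathrm{SU}(3)$ structure equations is purely pointwise: by Lemma~\ref{lemNKEqns} they are equivalent to $\det C=\varepsilon^2+C(V,V)$, which is precisely \eqref{eqnNearlyToric} after substituting $\varepsilon^2=\tfrac83(1-\partial_r)\varphi$ and $C(V,V)=(\partial_r^2-\partial_r)\varphi$ from Theorem~\ref{thmInTermsOfPhi}. For the nearly K\"ahler equations I would again invoke Lemma~\ref{lemNKEqns}. The first, $d(\varepsilon\vec c)=0$, is automatic because the construction sets $\varepsilon\vec c=d\mu$, an exact form. The second, $\tfrac14\varepsilon\,d\theta=\vec c\wedge\vec c-\mu\,\mu\cdot(\gamma\wedge\gamma)$, becomes a prescription for the curvature $d\theta=dA$: I must choose $A$ on $U_0$ with $dA$ equal to the explicit $\mathfrak t$-valued $2$-form $\Omega:=4\varepsilon^{-1}\big(\vec c\wedge\vec c-\mu\,\mu\cdot(\gamma\wedge\gamma)\big)$.

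Here lies the main obstacle. Since $U_0$ is an open subset of $\mathbb R^3$ (and one may pass to a simply-connected piece), such an $A$ exists as soon as $\Omega$ is closed, so everything reduces to the integrability identity $d\Omega=0$. After writing $\vec c=\varepsilon^{-1}d\mu$ and $\gamma=\varepsilon^{-1}C^{-1}(d\mu)$, the form $\Omega$ is an explicit expression in $\mu$, in $\varepsilon^{2}=\tfrac83(1-\partial_r)\varphi$, and in $C=\Hess\varphi$, so $d\Omega=0$ is a third-order differential identity in $\varphi$. I expect this to be the only genuinely computational step, and the place where \eqref{eqnNearlyToric} is used in differentiated rather than merely algebraic form: one differentiates the Monge--Amp\`ere relation and feeds the resulting identities among third derivatives of $\varphi$ into the expansion of $d\Omega$ to make it vanish. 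Once $\theta$ is produced this way, $\mathbb T^3$-invariance of all the data is clear by construction, the three generators of the translation action are the required commuting infinitesimal automorphisms, and the theorem follows.
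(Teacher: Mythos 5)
You should first note that the paper contains no proof of this statement: Theorem~\ref{thmMNInverseConstruction} is imported verbatim from \cite{moroianu2018toric}, so your attempt can only be measured against the proof in that reference --- which does proceed along the lines you sketch (build all the data on $U_0\times\mathbb T^3$ out of $\varphi$, observe via Lemma~\ref{lemNKEqns} and Theorem~\ref{thmInTermsOfPhi} that the algebraic $\mathrm{SU}(3)$ part of the structure equations is exactly \eqref{eqnNearlyToric}, and realize the remaining nearly K\"ahler equations as a curvature prescription for the connection form $\theta$). So your strategy is the right one, but it contains a concrete error. The ``moment-map identity'' you invert is wrong: it is not $\varepsilon\vec c=d\mu$ but $d\mu=-4\varepsilon\cdot\gamma$ (Lemma~4.1(i) of \cite{moroianu2018toric}, quoted in Section~\ref{secGlobal}). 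Hence $\gamma$ must be defined as $-\tfrac{1}{4\varepsilon}(d\mu)^\natural$, not $\varepsilon^{-1}C^{-1}(d\mu)$: your $\gamma$ carries a spurious factor of $C^{-1}$. This is not cosmetic, because every formula in Lemma~\ref{lemNKEqns} --- including $\det C=\varepsilon^2+C(V,V)$, which you invoke for the ``purely pointwise'' step, and the formula for $d\theta$ --- presupposes the canonical frame with $\gamma=J\theta$ and $g=D$ in that frame; with your normalization the pointwise verifications fail and your curvature form $\Omega$ is the wrong tensor. (What you actually need for the first nearly K\"ahler equation does survive under the correct definitions: since $C=\Hess\varphi$, one gets $\varepsilon\vec c=-\tfrac14\,d(\nabla\varphi)$, exact with explicit primitive $\nabla\varphi$ --- but the primitive is $\nabla\varphi$, not $\mu$.)

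The more serious gap is that you never prove the one claim that carries the content of the theorem: the integrability identity $d\Omega=0$ for the prescribed curvature. Everything else in your outline is the algebraic dictionary already recorded in Lemma~\ref{lemNKEqns} and Theorem~\ref{thmInTermsOfPhi}; the closedness computation, which uses \eqref{eqnNearlyToric} in differentiated form, is precisely where a proof is required, and ``I expect this to be the only genuinely computational step'' leaves your argument a plan rather than a proof. There is also a topological slip in how you would then solve $dA=\Omega$: passing to a simply connected piece controls $H^1$, not $H^2$, so closedness of $\Omega$ does not produce $A$ on all of $U_0$ (a punctured ball is simply connected with $H^2\neq 0$). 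To obtain a structure defined \emph{canonically} on all of $U_0\times\mathbb T^3$, as the theorem asserts, one must exhibit an explicit primitive of $\Omega$ built from $\varphi$ --- in the same way $\nabla\varphi$ serves as a primitive for $-4\varepsilon\vec c$ --- rather than appeal to a local Poincar\'e lemma on shrunken domains.
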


Note that if $\varphi$ is given by a toric nearly K\"ahler structure, then $(1-\partial_r)\varphi$ is proportional to the  $\varepsilon^2$, and  $D$ is the expression of $g$ in the frame $(\theta,\gamma)$. 
Now consider the following set with an a priori weaker constraint than $U_0$: $$\hat U_0 := \big\{x\in U: (1-\partial_r)\varphi> 0 \text{ and } \Hess\varphi \text{ is positive definite}\big\}.$$
With some linear algebra, we will see that this constraint is actually not weaker:

\begin{lemma}\label{lemEpsDomVol}
	$U_0 = \hat U_0.$
\end{lemma}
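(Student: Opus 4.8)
The inclusion $U_0\subseteq\hat U_0$ is immediate, because $\Hess\varphi$ is the upper-left block of $D$: for $v\neq0$ we have $(v,0)^\top D\,(v,0)=v^\top(\Hess\varphi)v$, so $D\succ0$ forces $\Hess\varphi\succ0$. The content of the lemma is therefore the reverse inclusion, and the plan is to recognize $D$ as the realification of a Hermitian matrix so that its positivity can be read off from a single determinant.

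Concretely, give $\mathfrak t\oplus\mathfrak t$ the complex structure $(v,w)\mapsto(-w,v)$. With respect to it the real symmetric matrix $D=\left(\begin{smallmatrix}\Hess\varphi & -\mu\\ \mu & \Hess\varphi\end{smallmatrix}\right)$ is exactly the realification of the complex endomorphism $H:=\Hess\varphi+i\mu$, which is Hermitian since $\Hess\varphi$ is symmetric and $\mu$ is skew. Comparing quadratic forms gives $(v,w)^\top D\,(v,w)=\bar z^{\,\top} H z$ for $z=v+iw$, so $D\succ0$ if and only if $H\succ0$. It thus suffices to prove that on $\hat U_0$, where $\Hess\varphi\succ0$ and $\varepsilon^2>0$, one has $H\succ0$. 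Since $\Hess\varphi\succ0$, congruence by $(\Hess\varphi)^{1/2}$ turns $H$ into $I+i\widetilde\mu$ with $\widetilde\mu=(\Hess\varphi)^{-1/2}\mu(\Hess\varphi)^{-1/2}$ real and skew. A real skew-symmetric $3\times3$ matrix has eigenvalues $0,\pm i\lambda$ with $\lambda\ge0$, so $I+i\widetilde\mu$ has eigenvalues $1,\,1\pm\lambda$, two of which are automatically positive. By Sylvester's law of inertia $H$ has the same signature, and hence $H\succ0$ precisely when $\det H>0$.

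It remains to evaluate $\det H=\det(\Hess\varphi+i\mu)$, and this is the step I expect to be the main obstacle. The plan is to use the classical identity that, for a symmetric $3\times3$ matrix $A$ and the skew matrix $S$ of a vector $w$ (the one with $x\mapsto w\times x$), one has $\det(A+tS)=\det A+t^2\,w^\top A w$; the linear and cubic terms vanish because $\adj A$ is symmetric while $S$ is skew, and because $\det S=0$ in odd dimension. The vector $V$ is precisely the axial vector attached to the skew form $\mu$ through the isomorphism $\natural$ (the sign is immaterial, as the formula is quadratic in $w$), so putting $t=i$ yields $\det H=\det\Hess\varphi-\Hess\varphi(V,V)=\det C-C(V,V)$, which by the structure equation \eqref{eqnToricSU3} equals $\varepsilon^2$.

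On $\hat U_0$ we have $\varepsilon^2=\tfrac83(1-\partial_r)\varphi>0$, so $\det H>0$, giving $H\succ0$ and therefore $D\succ0$; this proves $\hat U_0\subseteq U_0$ and completes the equality. The one point genuinely requiring care is the identification of the degree-two mixed determinant with $C(V,V)$ via the axial-vector description of $\mu$; once that is checked, the structure equation \eqref{eqnToricSU3} converts the determinant into $\varepsilon^2$ and the remaining arguments are routine linear algebra. It is worth noting that this is exactly where the nearly Kähler equation enters: without \eqref{eqnToricSU3} the quantity $\det\Hess\varphi-C(V,V)$ need not be nonnegative, and it is the structure equation that ties its positivity to the single scalar condition $\varepsilon^2>0$.
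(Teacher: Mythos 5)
Your proof is correct, but it takes a genuinely different route from the paper's. The paper argues by contradiction on null vectors: setting $j = C^{-1}\mu$, a null vector $(v,w)$ of $D$ forces $jw=v$, $jv=-w$, hence an eigenvalue $-1$ for $j^2$; computing $j^2 = \frac{-C(V,V)\Id + (CV)\otimes V}{\det C}$ shows the nonzero eigenvalue is $-\frac{C(V,V)}{\det C}$, which by the structure equation \eqref{eqnToricSU3} equals $-1$ only when $\varepsilon = 0$. You instead recognize $D$ as the realification of the Hermitian matrix $H = C + i\mu$, use Sylvester's law to see that two eigenvalues of $H$ are automatically positive, and reduce everything to the determinant identity $\det(C+i\mu) = \det C - C(V,V) = \varepsilon^2$ --- a pleasant reformulation of \eqref{eqnToricSU3} itself. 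Your identification of $V$ with the axial vector of $\mu$ under $\natural$, which you correctly flag as the delicate normalization point, is consistent with the paper's conventions (one can cross-check it against the paper's formula for $j^2$: taking $C=\Id$ gives $j^2 = V V^{\top} - C(V,V)\Id$, the standard axial-vector relation). Your route arguably buys something the paper's terse proof leaves implicit: showing $D$ has no null vectors on $\hat U_0$ yields only invertibility of $D$ there, and upgrading this to positive definiteness requires an additional continuity or deformation argument (e.g.\ replacing $\mu$ by $t\mu$, $t\in[0,1]$, and noting $D$ stays nonsingular along the path since $(1-t^2)C(V,V)+\varepsilon^2 > 0$); your inertia computation establishes definiteness directly at each point. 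Conversely, the paper's approach stays closer to the underlying complex geometry, as $j$ is essentially the almost complex structure expressed in the frame $(\theta,\gamma)$, and it reuses the formula $C^{-1}\mu C^{-1} = \left(\frac{CV}{\det C}\right)^{\natural}$ that is of independent use.
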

\begin{proof}
	Since $D$ being positive definite implies that $\Hess\varphi$ is positive definite, we find that $U_0\subseteq \hat U_0$. It remains to show that $D$ has no null vectors in $\hat U_0$, which implies the reverse inclusion.
	
	Let $C=\Hess\varphi$ and $\varepsilon^2 = \tfrac 83(1-\partial_r)\varphi$. 
Defining $j = C^{-1}\mu$, we find that any null vector for $D$ is of the form $(v,w)\in \mathfrak t\oplus \mathfrak t$ at some point $p\in \hat U_0$ with 
$$jw = v, \qquad jv = -w.$$
Thus $v$ and $w$ are eigenvectors of $j^2$ at $p$ with eigenvector $-1$. Thus it suffices to show that $j^2\in\Gamma\big(U_0,\End(\mathfrak t)\big)$ never attains an eigenvector $-1$.

Choosing a basis for $\mathfrak t$ so that $C_p$ is diagonal at any chosen $p\in \hat U_0$ allows one to verify that $C^{-1}\mu C^{-1} = \left(\frac{CV}{\det C}\right)^\natural$, where we abuse notation by using $\natural$ to also denote the isomorphism $\Lambda^3\mathfrak t\otimes \mathfrak t^*\cong \Lambda^2\mathfrak t$. Then
$$ j^2 = C^{-1}\mu C^{-1}\mu = \left(\frac{CV}{\det C}\right)^\natural V^\natural = \frac{-C(V,V)\Id + (CV)\otimes V}{\det C},$$
where throughout this computation we've been using juxtaposition to denote `matrix multiplication', or contraction of a single $\mathfrak t, \mathfrak t^*$ index pair. Since $V\in\ker \mu\leq \ker j$, we find that $j^2$ has eigenvalues $0$ with multiplicity $1$ and $-\frac{C(V,V)}{\det C}$ with multiplicity $2$. By the toric nearly K\"ahler equation, $-1$ is an eigenvalue only when $\varepsilon=0$, which is impossible on $\hat U_0$ by definition.
\end{proof}

This lemma can be used to interpret what goes wrong when trying to find a completion of a local toric nearly K\"ahler manifold. If some connected $\mathring M$ is maximal in the sense that it is not properly contained in a toric nearly K\"ahler manifold where $\varepsilon$ doesn't vanish, what is happening at the boundary? Using $\mu$, we can interpret this boundary as a set of points in $\Lambda^2\mathfrak t^*$. By (\ref{eqnToricSU3}), $C$ is going to remain positive definite as long as $\varepsilon$ doesn't vanish. Thus the previous lemma shows that if $\varepsilon$ does not limit to $0$ at the boundary point, then the local solution $\varphi$ to the toric nearly K\"ahler equation cannot be extended to the boundary point. In section \ref{secRadial}, we show that local radial solutions can be extended to have the radius defined between $0$ and some finite $r_0$. The differential equation is singular at $0$, while $\varepsilon$ vanishes when the radius is $r_0$.



\section{Relation to toric $G_2$ manifolds}

For a strict nearly K\"ahler manifold $(M,\omega,\psi^++i\psi^-)$ with metric $g$, consider the Riemannian cone $\big(N=M\times (0,\infty),g_N = r^2 g + dr^2\big)$, where $r\in(0,\infty)$ is the radial coordinate. It is well known that $N$ admits a $G_2$ structure given by $$\varphi:=d\left(\frac{r^3\omega}3\right),	
\quad *\varphi:=-d\left(\frac{r^4\psi^-}4\right).$$
If $M$ is toric, then the torus action lifts to an multi-Hamiltonian action on $N$ with respect to the form $\varphi$ and $*\varphi$. This makes $N$ a \emph{toric $G_2$} manifold as studied in \cite{madsen2018toric}. The corresponding multi-moment maps for $\varphi$ and $*\varphi$ respectively are 
\begin{align}\label{eqnConeMoments}
	\nu_N:=\tfrac 13r^3\mu:N\to\Lambda^2\mathfrak t^*,\quad \varepsilon_N:=-\tfrac 14r^4\varepsilon:N\to\Lambda^3\mathfrak t^*.
\end{align}

From \cite{madsen2018toric}, $\nu_N\oplus\varepsilon_N$ maps the set of singular orbits $S$ of $N$ to a graph in $\Lambda^2\mathfrak t^*\oplus\Lambda^3\mathfrak t^*\cong\mathbb R^3\oplus\mathbb R$. Moreover, $\varepsilon_N$ is constant on each connected component of $S$. In the case when $N=M\times (0,\infty)$ is the cone over a toric nearly K\"ahler manifold, then the radial symmetries of (\ref{eqnConeMoments}) imply that $\varepsilon_N$ vanishes on the graph, and moreover each edge of the graph is a radial ray shining out from the origin in $\Lambda^2\mathfrak t^*$. Since points on the edge of the graph correspond to torus orbits where a single circle collapses, we immediately find

\begin{cor}\label{corSingDisc}
	On a toric nearly K\"ahler manifold, the torus action is free away from a discrete set of orbits where a single circle collapses and $\varepsilon$ vanishes.
\end{cor}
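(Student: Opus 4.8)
The plan is to transport the orbit–type information from the toric $G_2$ cone $N = M\times(0,\infty)$ back down to $M$, exploiting the fact that the lifted $\mathbb{T}$–action ignores the radial coordinate. First I would observe that $\mathbb{T}$ acts only on the $M$–factor, so the stabilizer of a point $(p,r)\in N$ coincides with the stabilizer of $p\in M$. Consequently the singular set $S\subset N$ is exactly $S_M\times(0,\infty)$, where $S_M\subset M$ is the union of non-free orbits, and the orbit types match under this product decomposition. It therefore suffices to read off the structure of $S_M$ from the known structure of $S$ imported from \cite{madsen2018toric}.

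Next I would use the two stated facts about $\nu_N\oplus\varepsilon_N$. Since $\varepsilon_N = -\tfrac14 r^4\varepsilon$ vanishes on the graph $(\nu_N\oplus\varepsilon_N)(S)$ and $r>0$ throughout $N$, it follows that $\varepsilon$ vanishes identically on $S_M$; this is the asserted vanishing of $\varepsilon$ at non-free orbits. For discreteness, I would note that $\mu$ is $\mathbb{T}$–invariant, so each singular orbit $O\subset M$ has a single value $\bar\mu(O)\in\Lambda^2\mathfrak{t}^*$, and the corresponding component $O\times(0,\infty)$ of $S$ maps under $\nu_N = \tfrac13 r^3\mu$ onto the radial ray $\{\tfrac13 r^3\bar\mu(O): r>0\}$. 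Thus distinct singular orbits of $M$ correspond to distinct edges of the graph, and since the graph is a locally finite $1$–complex the set of such orbits must be discrete.

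Finally, for the orbit type I would invoke the stated fact that on the (open) edges of the graph a single circle collapses. The only locus where a larger subtorus could degenerate is at a vertex of the graph, and here the only candidate vertex is the origin of $\Lambda^2\mathfrak{t}^*$, which is approached as $r\to 0$ and hence lies outside $N = M\times(0,\infty)$. Therefore every singular orbit of $N$, and hence of $M$, sits on an open edge where exactly one circle degenerates, which gives the claim.

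The argument is short once the cone dictionary of Section~3 is in place, so the main point requiring care is not a deep calculation but the bookkeeping around the vertex and discreteness: one must confirm that the degenerate endpoint of each radial ray (where the single-circle conclusion could fail) is genuinely unrealized in $N$, and that the local finiteness of the graph is precisely what rules out accumulation of singular orbits. I expect this vertex-exclusion step to be the only place where the specific radial symmetry of the cone moments in \eqref{eqnConeMoments} is essential.
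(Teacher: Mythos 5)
Your proposal is correct and follows essentially the same route as the paper: lift to the toric $G_2$ cone, invoke the Madsen--Swann graph description of the singular set, and use the radial scaling of the cone multi-moment maps in \eqref{eqnConeMoments} to force $\varepsilon=0$ on non-free orbits and to identify each component of the singular set with a full radial ray, leaving no room for vertices (i.e.\ larger collapsing subtori) inside $N=M\times(0,\infty)$. The paper compresses all of this into the paragraph preceding the corollary, whereas you make explicit the two pieces of bookkeeping it leaves implicit --- the identification of stabilizers under $S=S_M\times(0,\infty)$ and the exclusion of the vertex at the origin --- which is a faithful, slightly more careful rendering of the same argument.
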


Since $C$ is the metric on the torus orbits, the vanishing locus of $\det C$ is the set where the torus action is not free. Note that by (\ref{eqnToricSU3}), the positive functions $\varepsilon^2$ and $C(V,V)$ both vanish on singular orbits. Thus $V$ is generates the circle which collapses.

\section{Global properties}\label{secGlobal}

Let $(M,\omega,\psi,\mathbb T)$ be a connected complete toric nearly K\"ahler $6$-manifold. In this section we will prove theorem \ref{thmMain} about the global properties of $M$. Recall that we define $\mathring M = M\backslash\ker\varepsilon$.

\begin{lemma}
	$\mu|_{\mathring M}$ is a submersion.
\end{lemma}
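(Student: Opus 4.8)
The plan is to show that $d\mu_p$ is surjective at every $p\in\mathring M$; since $\Lambda^2\mathfrak t^*$ is $3$-dimensional, this is equivalent to injectivity of the transpose map $\Lambda^2\mathfrak t\to T_p^*M$, $\zeta\mapsto d\langle\mu,\zeta\rangle_p$. So first I would compute this transpose. Because $\omega$ is $\mathbb T$-invariant with $d\omega=3\psi^+$, the defining property of the multi-moment map (see \cite{madsen2013closed}) gives, for $\zeta\in\Lambda^2\mathfrak t$,
$$d\langle\mu,\zeta\rangle=3\,\iota_{\Lambda^2\mathbb K(\zeta)}\psi^+;$$
this can also be checked directly with Cartan's formula, using $\mathcal L_{\mathbb K(\xi)}\omega=0$ and $[\mathbb K(\xi),\mathbb K(\eta)]=0$. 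Thus it suffices to prove that the contraction map $\zeta\mapsto\iota_{\Lambda^2\mathbb K(\zeta)}\psi^+$ is injective on $\Lambda^2\mathfrak t$ at each point of $\mathring M$.

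The heart of the argument is linear algebra exploiting that $\psi$ is a nowhere-vanishing $(3,0)$-form and that on $\mathring M$ the plane $\mathfrak t_M$ is totally real, i.e. transverse to $J\mathfrak t_M$. I would first complexify and observe that, since $\psi$ has type $(3,0)$, the contraction $\zeta\mapsto\iota_{\Lambda^2\mathbb K(\zeta)}\psi$ takes values in $\Lambda^{(1,0)}\mathring M$. Using $\psi=i\varepsilon\,\Lambda^3(\theta+i\gamma)$ and the framing $\zeta^j=\theta^j+i\gamma^j$ of $\Lambda^{(1,0)}$, fix a basis $\{e_i\}$ of $\mathfrak t$ and set $Z=\big(\zeta^j(\mathbb K(e_i))\big)$. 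Then the contraction is, up to the nonzero factor $i\varepsilon$, the second-exterior-power (cofactor) map determined by $Z$. On $\mathring M$ total reality forces the $(1,0)$-parts $\mathbb K(e_i)^{1,0}$ to be a complex basis, so $Z$ is invertible and hence $\Lambda^2 Z$ is an isomorphism; therefore the complex contraction $\zeta\mapsto\iota_{\Lambda^2\mathbb K(\zeta)}\psi$ is injective.

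Finally I would descend to real parts: since $\iota_{\Lambda^2\mathbb K(\zeta)}\psi^+=\re\big(\iota_{\Lambda^2\mathbb K(\zeta)}\psi\big)$ and the real part of a nonzero $(1,0)$-form is nonzero (a $(1,0)$-form $\alpha$ with $\alpha+\bar\alpha=0$ must vanish, as the two summands have independent types), injectivity of the complex contraction yields injectivity of the real one. This gives surjectivity of $d\mu_p$ for all $p\in\mathring M$, proving that $\mu|_{\mathring M}$ is a submersion.

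I expect the main obstacle to be the linear-algebra step: making precise that contracting the $(3,0)$-form $\psi$ with pairs of the generating fields yields linearly independent $1$-forms. This is exactly where the hypothesis $p\in\mathring M$ enters, through the invertibility of $Z$; on $\ker\varepsilon$, where $\mathfrak t_M$ ceases to be totally real, $Z$ degenerates and the argument (correctly) fails. A direct computation in the coframe $(\theta,\gamma)$ is possible but messier, since $\gamma(\mathbb K(e_i))$ need not vanish away from $\ker\mu$, so I would favour the intrinsic $(1,0)$-form formulation above.
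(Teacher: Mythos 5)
Your proposal is correct, but it takes a genuinely different route from the paper's, whose entire proof is a citation: Lemma 4.1(i) of \cite{moroianu2018toric} gives the closed formula $d\mu|_{\mathring M} = -4\varepsilon\cdot\gamma$, from which the submersion property is immediate since $\gamma$ has full rank on $\mathring M$ and $\varepsilon$ does not vanish there. You instead dualize and prove injectivity of $\zeta\mapsto d\langle\mu,\zeta\rangle = 3\,\iota_{\Lambda^2\mathbb K(\zeta)}\psi^+$; your Cartan-formula justification of this identity is sound (for commuting fields preserving $\omega$ one gets $d\big(\omega(X,Y)\big)=\iota_Y\iota_X d\omega$, and since $\mathfrak t$ is abelian the Lie kernel is all of $\Lambda^2\mathfrak t$, so the Madsen--Swann identity applies without restriction). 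The linear-algebra core is also right: since $\psi$ has type $(3,0)$, only the $(1,0)$-parts of the $\mathbb K(e_i)$ contribute, in the coframe $\theta+i\gamma$ the contraction is $i\varepsilon$ times the cofactor map $\Lambda^2 Z$, and $Z$ is invertible precisely because $\mathfrak t_M\cap J\mathfrak t_M=0$ on $\mathring M$ --- the same transversality characterization of $\mathring M$ that the paper records in section \ref{secLoc} (the kernel of $\mathfrak t_M\otimes\mathbb C\to T^{1,0}$ is nontrivial exactly when $\mathfrak t_M$ meets $J\mathfrak t_M$). Your final step, that a nonzero $(1,0)$-form has nonzero real part, is correct for the reason you give. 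In effect you re-derive the qualitative content of the cited Moroianu--Nagy formula without its constant or the identification of the image with $\varepsilon\cdot\gamma$: what you lose is that explicit formula (which the paper needs only for its full-rank consequence anyway), and what you gain is a self-contained argument that makes transparent why the statement fails exactly on $\ker\varepsilon$, where $Z$ degenerates.
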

\begin{proof}
	Lemma 4.1(i) in \cite{moroianu2018toric} gives $d\mu|_{\mathring M} = -4\varepsilon\cdot\gamma$. The result follows since $\gamma$ has full rank and $\varepsilon$ does not vanish on $\mathring M$.
\end{proof}

Using that $\varepsilon^2$ is decreasing in radial directions, we can show that $0\in\mu(\mathring M)$:

\begin{lemma}\label{lemStarShape}
	Every $p\in \mathring M$ is contained in some path $\ell$ such that $\mu|_\ell:\ell\to\Lambda^2\mathfrak t^*$ is an injective map whose image is a line segment between $0$ and $\mu(p)$.
\end{lemma}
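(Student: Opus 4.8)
The plan is to realize the path $\ell$ as an integral curve of an explicit horizontal vector field on $\mathring M$ that projects under $\mu$ to the inward radial field $-\partial_r$ on $\Lambda^2\mathfrak t^*$. If $\mu(p)=0$ the segment degenerates to a point and $\ell=\{p\}$ works, so I assume $\mu(p)\neq 0$. To build the lift, note that since $p\in\mathring M$ we have $\varepsilon\neq 0$, so contraction $\xi\mapsto\iota_\xi\varepsilon$ is an isomorphism $\mathfrak t\xrightarrow{\sim}\Lambda^2\mathfrak t^*$; combining this with the isomorphism $\gamma\colon J\mathfrak t_M\xrightarrow{\sim}\mathfrak t$ and the identity $d\mu|_{\mathring M}=-4\varepsilon\cdot\gamma$ from the previous lemma, there is a unique section $X\in\Gamma(\mathring M,J\mathfrak t_M)$ with $d\mu(X)=-\mu$, characterized by $\gamma(X)=V/(4\varepsilon)$ where $V=\natural^{-1}\mu$. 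Its integral curve $c(t)$ from $p$ then satisfies the linear ODE $\tfrac{d}{dt}\mu(c(t))=-\mu(c(t))$, so $\mu(c(t))=e^{-t}\mu(p)$, and as long as the flow exists its $\mu$-image traces the half-open segment $(0,1]\cdot\mu(p)$.

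Next I would show the flow never leaves $\mathring M$ and extract an energy identity. Because $\varepsilon^2$ descends to the base and $X$ projects to $-\partial_r$, Theorem \ref{thmInTermsOfPhi} gives $\tfrac{d}{dt}\varepsilon^2(c(t))=-\partial_r\varepsilon^2=\tfrac83 C(V,V)\geq 0$, so $\varepsilon^2$ is nondecreasing along $c$ and stays $\geq\varepsilon^2(p)>0$; hence $c$ cannot reach $\ker\varepsilon$. A key simplification is that $\gamma(X)=V/(4\varepsilon)\in\ker\mu$, which kills the off-diagonal terms of the metric $D$ in the frame $(\theta,\gamma)$ and makes the $\theta$-component of $X$ vanish; consequently $|X|^2=C(\gamma X,\gamma X)=C(V,V)/(16\varepsilon^2)$. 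Combining this with the previous display yields the clean identity $|X|^2=\tfrac{3}{128}\tfrac{d}{dt}\ln\varepsilon^2$, so the energy $\int_0^T|X|^2\,dt=\tfrac{3}{128}\ln\!\big(\varepsilon^2(c(T))/\varepsilon^2(p)\big)$ is governed by the growth of $\varepsilon^2$.

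The main obstacle is the completeness step: proving the maximal integral curve is defined on all of $[0,\infty)$ and converges to a point $q_\infty\in M$ with $\mu(q_\infty)=0$. Here I would invoke completeness of $(M,g)$ via the escape lemma and Hopf--Rinow: if the maximal time were finite, or the curve failed to converge, then $c(t)$ would have to leave every compact set, whereas $\mu(c(t))$ stays in the compact segment $[0,\mu(p)]$ and $\varepsilon^2\geq\varepsilon^2(p)$. The crux is therefore to rule out escape to infinity, which via \eqref{eqnToricSU3} (giving $|X|^2=\tfrac1{16}(\det C/\varepsilon^2-1)$) amounts to an a priori bound on $\det C$ as $\mu\to 0$. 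I expect to obtain the required control by combining the energy identity above with completeness to force the flow line to have finite length and hence a limit $q_\infty$; this estimate is the technical heart of the argument and where I anticipate the real work.

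Once $q_\infty$ is produced, $\ell:=c([0,\infty))\cup\{q_\infty\}$, reparametrized to a finite interval, is the desired path: its $\mu$-image is exactly the closed segment $[0,\mu(p)]$, and $\mu|_\ell$ is injective because $t\mapsto e^{-t}\mu(p)$ is strictly monotone and the value $0$ is attained only in the limit at $q_\infty$. As a byproduct this shows $0\in\mu(M)$, recovering the assertion preceding the statement that every point of $\mathring M$ is joined to the central fiber by a radial segment.
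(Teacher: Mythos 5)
Your construction is, at heart, the paper's argument made explicit: the paper also lifts the radial segment $[0,\mu(p)]$ through the submersion $\mu|_{\mathring M}$ (your $X$ is precisely the canonical horizontal lift in $J\mathfrak t_M$, using $d\mu=-4\varepsilon\cdot\gamma$), and it rules out the lift leaving $\mathring M$ by the same monotonicity from Theorem \ref{thmInTermsOfPhi}, namely $\partial_r(\varepsilon^2)=-\tfrac83 C(V,V)\le 0$. Your formulas $\gamma(X)=V/(4\varepsilon)$, $\mu(c(t))=e^{-t}\mu(p)$, $|X|^2=C(V,V)/(16\varepsilon^2)$ and the energy identity all check out. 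But the step you defer is a genuine gap, and the mechanism you sketch for closing it does not work. Finite energy on an infinite time interval does not bound length: $\int_0^\infty |X|^2\,dt<\infty$ is compatible with $\int_0^\infty|X|\,dt=\infty$ (e.g.\ $|X|\sim 1/t$), so the identity $\int_0^T|X|^2\,dt=\tfrac3{128}\ln\big(\varepsilon^2(c(T))/\varepsilon^2(p)\big)$ cannot "force the flow line to have finite length." A second flaw is the dichotomy you state from the escape lemma: a curve that stays in a compact set need not converge (its limit set can be a continuum in the fiber $\mu^{-1}(0)$), yet your path $\ell$ requires an actual endpoint $q_\infty$, since your exponential parametrization reaches $\mu=0$ only as $t\to\infty$. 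So convergence of $c(t)$, which is exactly what you need to adjoin $q_\infty$ continuously, is nowhere established.

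The missing ingredient that makes all of this soft is compactness: a complete strict nearly K\"ahler $6$-manifold is Einstein with positive scalar curvature, hence compact by Myers' theorem (which the paper invokes explicitly in the topology section). Then the superlevel set $K=\{\varepsilon^2\ge\varepsilon^2(p)\}$ is a compact subset of $\mathring M$, your monotonicity confines the flow to $K$, the escape lemma gives existence for all $t\ge 0$, and no a priori control of $\det C$ or of length is needed — the "technical heart" you anticipate largely dissolves, except for the convergence issue above. The paper sidesteps convergence-at-the-endpoint by parametrizing by the segment itself and running an open–closed argument on the maximal liftable subsegment $L\subseteq[0,\mu(p)]$: $L$ is open by the submersion lemma, and if $L=(\mu(p'),\mu(p)]$ were not closed, completeness yields a limit point $p'\in M\backslash\mathring M$, so $\varepsilon(p')=0$; since $\varepsilon^2$ is non-increasing outward along the ray, this would force $\varepsilon^2<0$ on $L$, a contradiction. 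I recommend you either adopt that open–closed formulation, or keep your flow but add compactness via Myers together with an argument that $c(t)$ genuinely converges (for instance by showing the lift is eventually a graph over the ray in a trivializing neighbourhood of the fiber over $0$).
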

\begin{proof}
	If $\mu(p)=0$, there is nothing to show. Otherwise, there exists some maximal line segment $L$ contained in $[0,\mu(p)]$ which lifts to a path $\hat L$ in $\mathring M$ containing $p$. By the previous lemma, $L$ is non-empty and open. If $L$ is closed, then $L = [0,\mu(p)]$ as required. Otherwise, since $M$ is complete, $\hat L$ has a limiting point $p'\in M\backslash\mathring M$ so that $L = (\mu(p'),\mu(p)]$. Thus $\varepsilon(p')=0$. But theorem \ref{thmInTermsOfPhi} shows that $\partial_r(\varepsilon^2)=-\tfrac 38 C(V,V)\leq 0$. Since $L$ is in a radial direction, as is the derivative $\partial_r$, we find that $\varepsilon^2$ is negative along $L$, a contradiction.
\end{proof}

	Since $\mu$ and $\varepsilon$ are $\mathbb T$-invariant, they induce maps $\bar\mu:M/\mathbb T\to\Lambda^2\mathfrak t^*$ and $\bar\varepsilon:M/\mathbb T\to\Lambda^3\mathfrak t^3$, which are called \emph{orbital multimoment maps}.

\begin{lemma}
	For any connected component $M_0$ of $\mathring M$, $\bar\mu|_{M_0/\mathbb T}$ is injective.
\end{lemma}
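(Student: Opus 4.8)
The plan is to show that $\bar\mu|_{M_0/\mathbb T}$ is a local diffeomorphism and then promote this to injectivity by a monodromy argument built on the inward radial lifts supplied by Lemma \ref{lemStarShape}. The key observation is that once one knows the fibre over the origin is a single orbit, uniqueness of lifts for a local diffeomorphism forces every fibre to be a single orbit.

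First I would verify that $\bar\mu|_{M_0/\mathbb T}$ is a local diffeomorphism onto an open set $\Omega\subseteq\Lambda^2\mathfrak t^*\cong\mathbb R^3$. By Corollary \ref{corSingDisc} the $\mathbb T$-action is free on $\mathring M$, so $M_0/\mathbb T$ is a smooth, connected $3$-manifold; since $\mu|_{\mathring M}$ is a submersion (as established above) and is $\mathbb T$-invariant, the induced map $\bar\mu$ is a submersion between $3$-manifolds, hence a local diffeomorphism. From this I record the standard consequences I will use repeatedly: each fibre $\bar\mu^{-1}(y)$ is discrete and closed, and a lift of a given path is determined by its value at a single parameter (two lifts of the same path agreeing at one point agree on the whole connected parameter interval).

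Next I exploit the radial structure. For $x\in M_0/\mathbb T$, Lemma \ref{lemStarShape} provides a path through $x$ whose $\mu$-image is exactly the segment $[0,\bar\mu(x)]$; by uniqueness of lifts this inward radial lift is unique and terminates at a well-defined point $\pi(x)$ of the (nonempty, discrete) fibre $\bar\mu^{-1}(0)$. The heart of the argument is to show that $\pi\colon M_0/\mathbb T\to\bar\mu^{-1}(0)$ is locally constant. Near a point $x_0$ the inward lift over $[0,\bar\mu(x_0)]$ is a compact path, so it can be covered by finitely many open charts on which $\bar\mu$ is a diffeomorphism; for $x$ near $x_0$ the nearby segment $[0,\bar\mu(x)]$ then lifts inside these same charts, stays close to the lift from $x_0$, and terminates near $\pi(x_0)$. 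Since $\bar\mu^{-1}(0)$ is discrete this forces $\pi(x)=\pi(x_0)$. As $M_0/\mathbb T$ is connected, $\pi$ is constant, so $\bar\mu^{-1}(0)$ consists of a single orbit $\{p\}$.

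Injectivity over a general $y\in\Omega$ then follows quickly: any $x\in\bar\mu^{-1}(y)$ is the endpoint of the outward radial lift of $[0,y]$ starting at $p$, obtained by reversing the inward lift from $x$ (which ends at $\pi(x)=p$). Since this outward lift is unique, $\bar\mu^{-1}(y)$ can contain only one point. I expect the main obstacle to be the local constancy of $\pi$, namely the continuity of the inward radial lifts in their endpoints together with the guarantee that nearby segments lift all the way to the fibre over $0$; this is a compactness-plus-continuation argument that leans on Lemma \ref{lemStarShape} and on $\bar\mu^{-1}(0)$ being discrete. An equivalent packaging, which I would mention, is that these facts show $\bar\mu|_{M_0/\mathbb T}$ is a covering onto the star-shaped (hence simply connected) image $\Omega$, so that connectedness of $M_0/\mathbb T$ makes it a single-sheeted covering, i.e.\ a homeomorphism onto $\Omega$.
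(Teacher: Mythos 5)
Your proposal is correct and follows essentially the same route as the paper: you construct the map sending each orbit to the endpoint of its inward radial lift (the paper's map $F$, your $\pi$), use connectedness of $M_0/\mathbb T$ plus discreteness of $\bar\mu^{-1}(0)$ to make it constant, and then conclude injectivity from uniqueness of lifts of the segment $[0,y]$ under the local homeomorphism $\bar\mu$. The only difference is one of detail: you spell out the well-definedness and local constancy of $\pi$ (via uniqueness of lifts and a compactness-plus-continuation argument) that the paper dismisses as ``clearly well defined and continuous,'' and you phrase the final uniqueness step via path lifting where the paper uses matching tangent directions at $o_0$ --- the same idea in different clothing.
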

\begin{proof}
	For any $p\in M_0$, let $\ell$ be the path between $p$ and some $p'\in\mu^{-1}(0)$ guaranteed by the previous lemma. The map 
	$$F:M_0/\mathbb T\to\bar\mu^{-1}(0):\mathbb T p\mapsto\mathbb T p'$$
	is clearly well defined and continuous. Since $M_0/\mathbb T$ is connected and $\bar\mu^{-1}(0)$ is discrete, the image of $F$ is a single orbit which we will denote by $o_0\in M_0/\mathbb T$.
	
	Now let $o_1,o_2\in M_0/\mathbb T$ with $\bar\mu(o_1)=\bar\mu(o_2)$.
	 For each $i\in\{1,2\}$, the previous lemma can be used to construct a path $\hat L_i$ between $o_0$ and $o_i$ in $M_0/\mathbb T$ which is a lift of the line segment $L$ between $0$ and $\bar\mu(o_1)=\bar\mu(o_2)$. Since $\mu|_{\mathring M}$ is a submersion, $\bar\mu|_{\mathring M/\mathbb T}$ is a local homeomorphism. In particular, $T_{o_0}\hat L_1 = \bar\mu^{-1}(T_0 L )= T_{o_0}\hat L_2$, implying that $\hat L_1=\hat L_2$. Thus $o_1=o_2$ as required.
\end{proof}

By this lemma, $\mu$ induces global coordinates on $M_0/\mathbb T$. In particular $\varepsilon$ can be viewed as a function on $M_0/\mathbb T$, and $(\mu,\varepsilon)(M_0)$ is the graph of this function. Since $\varepsilon$ vanishes on $\partial M_0$, we find that $M/\mathbb T$ is recovered by gluing together two of these graphs:

	\begin{theorem}\label{thmSphereImage}
		$(\bar\mu,\bar\varepsilon):M/\mathbb T\to\Lambda^2\mathfrak t^*\oplus\Lambda^3\mathfrak t^*$ is injective with image a $3$-sphere.  Moreover, the component of $(\Lambda^2\mathfrak t^*\oplus\Lambda^3\mathfrak t^*)\backslash(\mu,\varepsilon)(M)$ containing $0$ is star-shaped about $0$.
	\end{theorem}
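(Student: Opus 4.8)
The plan is to realise $M/\mathbb T$ as two star-shaped ``graphs'' glued along their common boundary $\ker\varepsilon/\mathbb T$, the topological model being two closed $3$-balls glued along their boundary $2$-spheres. Since $\varepsilon$ is valued in $\Lambda^3\mathfrak t^*\cong\mathbb R$, the set $\mathring M=M\setminus\ker\varepsilon$ splits according to the sign of $\varepsilon$; write $M_+$ and $M_-$ for the unions of components on which $\varepsilon>0$ and $\varepsilon<0$. On a single connected component $M_0$ the injectivity lemma above makes $\bar\mu|_{M_0/\mathbb T}$ injective, the submersion lemma makes $\Omega_0:=\mu(M_0)$ open, and Lemma \ref{lemStarShape} shows $0\in\Omega_0$ and that $\Omega_0$ is star-shaped about $0$. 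Hence $(\mu,\varepsilon)(M_0)$ is literally the graph of the single-valued function $f_0:=\bar\varepsilon\circ(\bar\mu|_{M_0/\mathbb T})^{-1}$ over $\Omega_0$, of constant sign.

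First I would pin down the shape of each $\Omega_0$. Along a ray from $0$ the computation in the proof of Lemma \ref{lemStarShape} gives $\partial_r(\varepsilon^2)\le 0$, so $\varepsilon^2$ is nonincreasing outward and is bounded above by its central value (using that the Euler field $\partial_r=\mu^i\partial_{\mu_i}$ vanishes at $0$, so by Theorem \ref{thmInTermsOfPhi} this value is $\tfrac83\varphi(0)=:\varepsilon_{\max}^2>0$). Using completeness I would extend the radial lift of Lemma \ref{lemStarShape} outward until $\varepsilon\to0$; the monotonicity then yields a well-defined finite positive boundary radius $r_0(u)$ in each direction $u$, continuous in $u$. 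This identifies $\bar\Omega_0$ with a closed topological $3$-ball, $\partial\Omega_0$ with $\mu(\ker\varepsilon)$, and shows $f_0\to0$ on $\partial\Omega_0$. This is the step where completeness is essential: one must rule out a radial lift running to infinity, i.e. $\varepsilon^2$ decreasing to a positive limit, which would force $C(V,V)\to0$ and hence a singular orbit ``at infinity''.

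The crux is then to show there are exactly two components with coinciding domains, and that the gluing produces a sphere. By Lemma \ref{lemStarShape} each component meets $\mu^{-1}(0)$ in exactly one orbit, so the number of components equals $\#(\mu^{-1}(0)/\mathbb T)$; every orbit over $\mu=0$ is Lagrangian, lies in $\mathring M$ with $\varepsilon^2=\varepsilon_{\max}^2$, so $\varepsilon=\pm\varepsilon_{\max}$ there. To see there is precisely one orbit of each sign I would exploit that, by the identity $d\mu|_{\mathring M}=-4\varepsilon\gamma$ used in the submersion lemma, $d\mu\to0$ as $\varepsilon\to0$ and $d\mu$ reverses as $\varepsilon$ changes sign: thus $\mu$ folds along $\ker\varepsilon$, so away from the discrete singular orbits of Corollary \ref{corSingDisc} each boundary orbit in $\ker\varepsilon$ joins exactly one positive and one negative sheet lying on the same side of $\partial\Omega$. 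Connectedness of $M$ then forces $M_+$ and $M_-$ to be connected with a common domain $\Omega:=\Omega_+=\Omega_-$, and $\ker\varepsilon/\mathbb T\xrightarrow{\sim}\partial\Omega$. The two graphs share the boundary behaviour $f_\pm\to0$ on $\partial\Omega$, so $\overline{M_+/\mathbb T}\cong\bar\Omega$ and $\overline{M_-/\mathbb T}\cong\bar\Omega$ are closed balls glued along $\partial\Omega$, giving $M/\mathbb T\cong\mathbb S^3$. The map $(\bar\mu,\bar\varepsilon)$ is injective because it is injective on each graph, the two graphs take values in the disjoint half-spaces $\{\varepsilon>0\}$ and $\{\varepsilon<0\}$, and it restricts to the homeomorphism $\ker\varepsilon/\mathbb T\xrightarrow{\sim}\partial\Omega\times\{0\}$; its image lies in the bounded set $\bar\Omega\times[-\varepsilon_{\max},\varepsilon_{\max}]$, and completeness makes it proper, upgrading the continuous bijection to a homeomorphism onto the $3$-sphere.

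Finally, the component of the complement containing $0$ is $B=\{(x,s):x\in\Omega,\ f_-(x)<s<f_+(x)\}$, and $0=(0,0)\in B$ since $f_-<0<f_+$ on $\Omega$. Star-shapedness about $0$ is a short check using the radial monotonicity $|f_\pm(tx)|\ge|f_\pm(x)|$ for $t\in[0,1]$: combined with $tx\in\Omega$ (as $\Omega$ is star-shaped) one gets $f_-(tx)\le\min(ts,0)$ and $f_+(tx)\ge\max(ts,0)$ whenever $f_-(x)<s<f_+(x)$, so the whole segment $\{(tx,ts)\}$ lies in $B$. The main obstacle throughout is the global gluing of the third paragraph: establishing exactly two components with matching boundary data requires the completeness-based control of radial lifts near $\ker\varepsilon$ together with the fold structure of $\mu$ along it, and identifying the singular orbits of Corollary \ref{corSingDisc} as the only points where the orbit space fails to be a manifold.
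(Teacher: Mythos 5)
Your proposal is correct and follows essentially the same route as the paper: components of $\mathring M$ are realised as graphs over a common star-shaped domain via Lemma \ref{lemStarShape} and the radial monotonicity $\partial_r(\varepsilon^2)\le 0$, the forced sign change of $\varepsilon$ across $\ker\varepsilon$ yields exactly two sheets $M_\pm$ with coinciding image glued along the boundary, making $\bar\mu$ a double cover ramified over $\partial U$ and $(\bar\mu,\bar\varepsilon)$ injective with image a $3$-sphere, and star-shapedness of the complement component follows from the same monotonicity exactly as in the paper's $D_\pm$ argument. The only divergence is bookkeeping: where you labour over boundedness of the image and properness (your ``singular orbit at infinity'' step is the shakiest point of the sketch), the paper simply glosses these, and in any case they follow immediately since a complete strict nearly K\"ahler $6$-manifold is Einstein with positive scalar curvature and hence compact by Myers' theorem.
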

\begin{proof}
	Let $M_+$ be a connected component of $\mathring M$. Since $\varepsilon\neq 0$ on $\mathring M$, $\varepsilon$ has a sign on $M_+$. Without loss of generality, by changing the sign of $\varphi$ if necessary, we can assume that $\varepsilon$ is positive on $M_+$.
	By lemma \ref{lemStarShape}, $U:=\mu(M_+)$ is star shaped around the origin.
	Thus $\partial_r|_{\partial U}$ points outward from the closure $\bar U$. 
	Since $\partial_r(\varepsilon^2)\leq 0$ and $\varepsilon$ vanishes on $\partial U\subseteq\partial\mathring M$, the sign of $\varepsilon$ must change on paths travelling across $\partial U$. Thus there is some other connected component $M_-$ of $\mathring M$ with the opposite sign of $\varepsilon.$ But $\mu(M_-)$ must also be star shaped around $0$ with boundary $\partial U$, so $\mu(M_-)=U=\mu(M_+)$. Since $\partial M_-=\partial M_+$, we have $M=\overline  M_+\cup \overline M_-$. In particular, $\bar\mu:M\to U$ is a double cover ramified over $\partial U$, with the sign of $\bar\varepsilon$ distinguishing the points in each $\bar\mu$ fibre. Thus $(\bar\mu,\bar\varepsilon)$ is injective. Since $U$ is diffeomorphic to a $3$-ball, the image of $(\bar\mu,\bar\varepsilon)$ is a $3$-sphere.
	
	The component of $(\Lambda^2\mathfrak t^*\oplus\Lambda^3\mathfrak t^*)\backslash(\mu,\varepsilon)(M)$ containing $0$ can be written as $D_+\cup D_-$, where $$D_\pm=\bigg\{(x,y)\in\Lambda^2\mathfrak t^*\oplus\Lambda^3\mathfrak t^*:x\in \bar\mu\left(\overline {M_\pm}\right),y\in \big[0,\pm\varepsilon(y)\big]\bigg\}.$$
	Now $D_+\cup D_-$ is star-shaped around $0$ if both $D_\pm$ are. This follows since $\bar\mu(\overline {M_\pm})$ is, and $\pm\varepsilon$ is decreasing in radial directions.
\end{proof}

We can now wrap up the proof of the main theorem:
\begin{proof}[Proof of theorem \ref{thmMain}]
	 The previous theorem combined with corollary \ref{corSingDisc} gives most of the claim. $\mathbb T$ orbits in $\ker\mu$ must be Lagrangian by definition, and there are two of them, since $\bar\mu$ is a double cover ramified at $\ker\varepsilon$.
\end{proof}

\section{Some topology}

We apply the results from the previous section to prove corollary \ref{corNumOrbits}. The obstruction we use to prove this comes from Myer's theorem \cite{myers1941riemannian}, which asserts that if a complete Riemannian manifold has Ricci curvature positive and bounded away from zero, then the diameter must be bounded. Since the same must be true for the universal cover, the fundamental group must be finite. In particular, the first Betti number must vanish. 

\begin{prop}\label{propNotFree}
	Let $(M,\omega,\psi,\mathbb T)$ be a connected complete toric nearly K\"ahler $6$-manifold. Then the action of $\mathbb T$ is not free.
\end{prop}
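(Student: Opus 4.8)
The plan is to turn Myers' theorem into a topological obstruction, by showing that a free action would force the first Betti number of $M$ to be positive, contradicting the finiteness of $\pi_1(M)$. First I would invoke the fact recalled at the start of this section: since $M$ is a complete $6$-dimensional strictly nearly K\"ahler manifold it is Einstein with positive scalar curvature, so its Ricci curvature is positive and bounded away from zero. Myers' theorem then gives that $M$ is compact with finite fundamental group; in particular $\pi_1(M)$ is finite and $b_1(M)=0$.

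Next, suppose for contradiction that $\mathbb T$ acts freely on $M$. Because $\mathbb T$ is compact and acts freely and smoothly, the quotient manifold theorem gives that $M/\mathbb T$ is a smooth manifold and that $\pi:M\to M/\mathbb T$ is a principal $\mathbb T^3$-bundle. By Theorem \ref{thmMain}, $M/\mathbb T$ is homeomorphic to $\mathbb S^3$.

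The core of the argument is then a formal computation with this fibration. Applying the long exact sequence of homotopy groups to $\mathbb T^3\hookrightarrow M\to\mathbb S^3$ gives
$$\cdots\to\pi_2(\mathbb S^3)\to\pi_1(\mathbb T^3)\to\pi_1(M)\to\pi_1(\mathbb S^3)\to\cdots.$$
Since $\pi_2(\mathbb S^3)=0$ and $\pi_1(\mathbb S^3)=0$, this yields $\pi_1(M)\cong\pi_1(\mathbb T^3)\cong\mathbb Z^3$, which is infinite. Equivalently, one may note that principal $\mathbb T^3$-bundles over $\mathbb S^3$ are classified by $H^2(\mathbb S^3;\mathbb Z)^3=0$, so the bundle is trivial, $M\cong\mathbb T^3\times\mathbb S^3$, and hence $b_1(M)=3$. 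Either way, this contradicts the finiteness of $\pi_1(M)$ obtained from Myers' theorem, so the action cannot be free.

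The argument is essentially automatic once $M/\mathbb T\cong\mathbb S^3$ is available, so I do not anticipate a serious obstacle. The one point deserving explicit care is the passage from a free torus action to a genuine principal bundle with base of the stated homotopy type: Theorem \ref{thmMain} only asserts a homeomorphism $M/\mathbb T\cong\mathbb S^3$, so I would emphasize that the homotopy-theoretic inputs $\pi_1$ and $\pi_2$ of the base depend only on this homotopy type and are insensitive to the smooth-versus-topological distinction, which is what makes the long exact sequence applicable.
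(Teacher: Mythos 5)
Your proof is correct and takes essentially the same route as the paper: both assume the action is free, use the global result that $M/\mathbb T\cong\mathbb S^3$ to view $M$ as a $\mathbb T^3$-bundle over $\mathbb S^3$, and conclude $\pi_1(M)\cong\mathbb Z^3$ (the paper extracts $H_1(M)\cong\mathbb Z^3$ from the Wang exact sequence, while you use the homotopy exact sequence or bundle classification---an immaterial difference), contradicting Myers' theorem. Your closing remark on the homeomorphism-versus-diffeomorphism subtlety is a sound extra precaution but not a substantive deviation.
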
 
\begin{proof}
	 Assume that the action of $\mathbb T$ is free, so that $M$ is a $\mathbb T^3$ bundle over $\mathbb S^3$. It follows that we have the Wang long exact sequence $$H_\bullet(\mathbb T^3)\to H_\bullet(M)\to H_{\bullet-3}(\mathbb T^3)\xrightarrow{[-1]},$$
which shows that $H_1(M)\cong H_1(\mathbb T^3)\cong\mathbb Z^3$ has positive rank. This contradicts Myer's theorem.
\end{proof}

Before we proceed to study the case when the set $S\subset M$ of non-free orbits is not empty, let us introduce some notation. Let $\Lambda < \mathfrak t$ be the lattice of circle subgroups, which allows us to identify $\mathbb T\cong \mathfrak t/\Lambda$. For each orbit $o$ in $S$, there is some $X_o\in\Lambda$ whose induced vector field on $M$ vanishes along $o$. Since $V$ vanishes along $o$, by the definition of $V$, $X_o\propto\mu(o)^\natural$. By lemma \ref{lemStarShape}, $\mu(M)$ is star shaped, so the line through $\mu(o)$ contains two points in $\mu(\ker\varepsilon)=\partial\mu(M)$, which are $o$ and an 'antipodal' point $o'$. Thus $X_o$ can only vanish at $o$ and perhaps also at $o'$.

Consider the case where $S$ has $k$ orbits. We will consider decompositions $M = A\cup B$, where $A$ and $B$ are both unions of $\mathbb T$-orbits and $A\cap B\cap S=\emptyset$. To see that such a decomposition exists, note that if $H$ is a hyperplane in $\Lambda^2\mathfrak t^*\oplus\Lambda^3\mathfrak t^*$ disjoint from $(\mu,\varepsilon)(S)$, then there exists a neighbourhood $U$ of $H$ also disjoint from $(\mu,\varepsilon)(S)$. Now it is clear that we can find $A$ and $B$ as claimed with $(\mu,\varepsilon)(A\cap B) = U$. Moreover, we see that no two orbits in $S\cap A$ (respectively $S\cap B$) correspond to the same element of $\Lambda$, since by the previous paragraph, they would correspond to antipodal points in $\mu(\ker\varepsilon)$, which are avoided by this construction.

Now we have $A\cong D_i$ and $B\cong D_j$ where $i+j = k$ and $D_i$ is a $\mathbb T^3$ fibration over the three-ball $\mathbb D^3$ with $i$ orbits where circles collapse. Moreover, these circles are different, in the sense that the collapsing directions correspond to different vectors in $\mathfrak t$. 

\begin{lemma}\label{lemDiHomlogy}
	$D_0\simeq \mathbb T^3$, $D_1\simeq\mathbb T^2$, $D_2\simeq \mathbb S^3\times\mathbb S^1$.
\end{lemma}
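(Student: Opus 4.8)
\section*{Proof proposal}

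The plan is to build each $D_i$ out of a trivial piece by analysing a \emph{single} collapsing orbit and then reassembling with Mayer--Vietoris. The case $D_0$ is immediate: the action is free, so $D_0$ is a principal $\mathbb T^3$-bundle over the contractible base $\mathbb D^3$, every such bundle is trivial, and $D_0\cong\mathbb D^3\times\mathbb T^3\simeq\mathbb T^3$.

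The essential local input is the model near a non-free orbit. By corollary \ref{corSingDisc}, at such an orbit a single primitive circle $\mathbb S^1_{V_j}$ collapses over an isolated interior point $p_j$ of $\mathbb D^3$; since the orbit space $M/\mathbb T$ is a manifold, the slice representation must be the standard free one on the linking $\mathbb S^3$, so a tube $W_j$ around the orbit is equivariantly $\mathbb T^2\times\mathbb R^4$ with $\mathbb S^1_{V_j}$ acting on $\mathbb R^4$ with only the origin fixed. Thus $W_j\simeq\mathbb T^2$, the collapse is a copy of $\Cone(\mathbb S^3)=\mathbb D^4$ in the $V_j$-direction, and the bundle restricted to $\partial B_j=\mathbb S^2_j$ carries $\mathbb S^1_{V_j}$ as the Hopf bundle, so its preimage is $\partial W_j\cong\mathbb S^3\times\mathbb T^2$. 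For $D_1$ I would then split off the two circle directions that never collapse: extending the primitive vector $V_1$ to a basis of $\Lambda$, the complementary $\mathbb T^2$ acts freely and its Euler class over $\mathbb S^2_1$ is the vanishing $\mathbb T^2$-component of $V_1$, so the bundle splits and $D_1\cong\mathbb T^2\times Z_1$ with $Z_1=\Cone(\mathbb S^3)=\mathbb D^4$ the $\mathbb S^1_{V_1}$-collapse over $\mathbb D^3$. Since $\mathbb D^4$ is contractible, $D_1\simeq\mathbb T^2$.

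For $D_2$ the key geometric point is that the two collapsing directions are linearly independent: the orbits map to distinct, non-antipodal points of the star-shaped boundary $\partial\mu(M)$ (antipodal pairs are avoided by the choice of the decomposition $M=A\cup B$), hence to distinct rays, so $\mu(o_1),\mu(o_2)$ and therefore $V_1,V_2=\natural(\mu(o_1)),\natural(\mu(o_2))$ are non-proportional. Completing to a basis $V_1,V_2,W$, the spectator circle $\mathbb S^1_W$ splits off (its Euler classes over both linking spheres, the $W$-components of $V_1,V_2$, vanish), giving $D_2\cong\mathbb S^1_W\times N$, where $N$ is the $\mathbb T^2_{\langle V_1,V_2\rangle}$-space over $\mathbb D^3$ collapsing $\mathbb S^1_{V_1}$ at $p_1$ and $\mathbb S^1_{V_2}$ at $p_2$. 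I would compute $H_\bullet(N)$ by excising tubes $W_1,W_2$ around the two orbits: the complement retracts onto the $\mathbb T^2$-bundle over $\mathbb D^3\setminus\{p_1,p_2\}\simeq\mathbb S^2\vee\mathbb S^2$ with Euler classes $V_1,V_2$, whose homology I read off from the transgression, and a Mayer--Vietoris glues back the two tubes $W_j\simeq\mathbb S^1$ along $\partial W_j\cong\mathbb S^3\times\mathbb S^1$. Because $V_1,V_2$ are independent the transgression differential is an isomorphism, which kills the fibre $H_1$ and leaves $N$ with the homology of $\mathbb S^3$; hence $D_2$ has the homology of $\mathbb S^3\times\mathbb S^1$.

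The main obstacle is controlling the inclusion-induced maps $H_\bullet(\partial W_j)\to H_\bullet(\text{complement})$ in the Mayer--Vietoris sequence for $D_2$: these are governed precisely by the Euler classes, which equal the collapsing vectors $V_j$, and it is their linear independence — a consequence of the star-shaped geometry of $\mu(M)$ — that forces the transgression to be an isomorphism and rules out extra first homology. I expect the remaining technical points to be verifying that the slice model is the standard Hopf one and that the spectator-circle bundles genuinely split (equivalently, that the sublattice $\langle V_1,V_2\rangle$ is primitive, so that no torsion appears), after which the three stated homology types follow.
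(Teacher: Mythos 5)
Your $D_0$ and $D_1$ arguments coincide with the paper's: it likewise trivializes the $\mathbb T^3$-bundle over $\mathbb D^3$, and retracts $D_1$ onto the tubular neighbourhood $\mathbb T^2\times\mathbb R^4$ of the collapsed orbit (your stronger global splitting $D_1\cong\mathbb T^2\times\mathbb D^4$ is more than is needed for a homotopy equivalence). One correction there: the standardness of the slice model does \emph{not} follow from $M/\mathbb T$ being a manifold --- a weight-$(1,q)$ circle action on $\mathbb C^2$ also has quotient cone homeomorphic to $\mathbb R^3$ --- it follows from corollary \ref{corSingDisc}: the action is free off the singular orbits, so the stabilizer circle acts freely on the slice sphere, which forces weights $(\pm1,\pm1)$ and hence the Hopf model.

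For $D_2$ you take a genuinely different route. The paper retracts $D_2$ onto the preimage $\tilde D_2$ of an arc in the quotient ball joining the two singular points, writes $\tilde D_2$ as $(\mathbb T^3\times[0,1])/\sim$ with one circle collapsed at each end, and exhibits an explicit homeomorphism onto $\mathbb S^3\times\mathbb S^1$ (essentially the join $\mathbb S^1*\mathbb S^1\cong\mathbb S^3$ in the coordinates it writes down). Your version --- split off a spectator circle and compute $H_\bullet(N)$ by excising tubes and running Mayer--Vietoris with the Gysin/Wang transgression --- is sound in outline, and it has the merit of isolating exactly which algebraic input (independence of $V_1,V_2$ in $\Lambda$) drives the answer; the paper's join picture instead buys the actual homotopy type in one stroke. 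But be aware your argument delivers less than the lemma states: Mayer--Vietoris gives only the \emph{homology} of $\mathbb S^3\times\mathbb S^1$, not a homotopy equivalence, and integrally the transgression $H_2(\mathbb S^2\vee\mathbb S^2)\to H_1(\mathbb T^2)$, $(a,b)\mapsto aV_1+bV_2$, is an isomorphism only if $\{V_1,V_2\}$ is a basis of a primitive rank-2 sublattice; mere linear independence yields $H_1\cong\mathbb Z^2/\langle V_1,V_2\rangle$, which may be a nonzero finite group (e.g. $V_1=(1,0)$, $V_2=(1,2)$ produces a lens space in place of $\mathbb S^3$). You flag this honestly, and in fairness the paper's own proof tacitly assumes the same primitivity when it chooses coordinates $\theta_1,\theta_2,\theta_3$ adapted to the two collapsing circles. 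Since corollary \ref{corNumOrbits} consumes only the Betti numbers $h_1$ and $h_4$, which are insensitive to such torsion, your proof suffices for every use the paper makes of lemma \ref{lemDiHomlogy}, even though it does not establish the stated homotopy equivalences verbatim.
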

\begin{proof}
Since every bundle over $\mathbb D^3$ is trivial, $D_0 \cong \mathbb T^3\times \mathbb D^3\simeq \mathbb T^3$. 

$D_1$ is a neighbourhood of the collapsed orbit. Thus $D_1\cong \mathbb T^2\times\mathbb R^4$ by identifying $\mathbb R^4$ with $D_1/\mathbb T$ and $\mathbb T^2$ the quotient of $\mathbb T$ with the circle that collapses.

$D_2/\mathbb T$ is a neighbourhood of a curve $C$ connecting the two collapsing orbits. Thus $D_2$ retracts to some $\tilde D_2$ such that $\tilde D_2/\mathbb T\cong C$. Since the circles that collapse are different, we can write $\tilde D_2\cong(\mathbb S^1\times\mathbb S^1\times [0,1])/\sim,$ where $\sim$ collapses the first circle at $0$ and the second circle at $1$. Now $$\tilde D_2\to\mathbb C^2\times\mathbb S^1:[\theta_1,\theta_2,\theta_3,x]\mapsto 
	\left(\sin\left(\tfrac\pi2x \right) e^{i\theta_1},
		\cos\big(\tfrac\pi2 x\right) e^{i\theta_2},\theta_3\big)$$
	identifies $\tilde D_2\cong\mathbb S^3\times\mathbb S^1$.
\end{proof}

Before we proceed to applying Meyer-Vietoris to the decomposition $M=A\cup B$, we still need to understand the equatorial region $E:=A\cap B$. 
\begin{lemma}
	$h_1(E)=h_4(E)\in [2,3]$.
\end{lemma}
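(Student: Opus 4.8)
The plan is to reduce $E$ to a principal torus bundle over a $2$-sphere and then read off its rational cohomology from the Serre spectral sequence. First I would pin down the homotopy type of $E$. Since $A\cong D_i$ and $B\cong D_j$ are fibrations over three-balls and $M/\mathbb T\cong\mathbb S^3$ by Theorem \ref{thmSphereImage}, the splitting $M/\mathbb T=(A/\mathbb T)\cup(B/\mathbb T)$ writes $\mathbb S^3$ as a union of two closed balls meeting in the bicollar $E/\mathbb T=(A\cap B)/\mathbb T$, a neighbourhood of the common boundary $2$-sphere; hence $E/\mathbb T$ deformation retracts onto this equatorial $\mathbb S^2$. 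Because $A\cap B\cap S=\emptyset$, the $\mathbb T$-action is free on all of $E$, so $E\to E/\mathbb T$ is a principal $\mathbb T^3$-bundle, and restricting over the core sphere shows $E$ is homotopy equivalent to a principal $\mathbb T^3$-bundle $N\to\mathbb S^2$. Such bundles are classified by an Euler class $c\in H^2(\mathbb S^2;\Lambda)\cong\Lambda$; although it will not be needed here, one checks that $c$ is a signed sum of the collapsing lattice vectors $X_o$ of the orbits in $S\cap A$.

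Next I would compute $H^\bullet(N;\mathbb Q)$ using the Serre spectral sequence of $\mathbb T^3\hookrightarrow N\to\mathbb S^2$. Its $E_2$-page $E_2^{p,q}=H^p(\mathbb S^2;\Lambda^q\mathfrak t^*)$ is concentrated in the columns $p\in\{0,2\}$, so $E_3=E_\infty$, and the single possibly nonzero differential $d_2\colon E_2^{0,q}\to E_2^{2,q-1}$ is, up to the generator of $H^2(\mathbb S^2)$, the contraction $\iota_c\colon\Lambda^q\mathfrak t^*\to\Lambda^{q-1}\mathfrak t^*$ with $c\in\mathfrak t$. Over $\mathbb Q$ the ranks of $\iota_c$ depend only on whether $c$ vanishes, giving exactly two cases. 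If $c=0$ then $N\cong\mathbb T^3\times\mathbb S^2$ and the K\"unneth formula gives $h_1(N)=h_4(N)=3$. If $c\neq0$ then $(\Lambda^\bullet\mathfrak t^*,\iota_c)$ is the acyclic Koszul complex of a nonzero vector, so $\iota_c$ is exact and a short bookkeeping of kernels and cokernels on the two columns yields $h_1(N)=h_4(N)=2$. In either case $h_1(E)=h_1(N)=h_4(N)=h_4(E)\in\{2,3\}$; the equality $h_1=h_4$ is moreover forced by Poincar\'e duality on the closed orientable $5$-manifold $N$.

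The only real subtlety is the first step: correctly identifying $E$, an open $6$-dimensional piece of $M$, up to homotopy with a principal $\mathbb T^3$-bundle over the equatorial $\mathbb S^2$, which rests on the collar structure coming from $M/\mathbb T\cong\mathbb S^3$ and on freeness of the action over $E$. Once this is in place the spectral sequence computation is routine, and the appearance of both admissible values $2$ and $3$ is precisely the dichotomy $c=0$ versus $c\neq0$.
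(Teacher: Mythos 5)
Your argument is correct and follows essentially the same route as the paper: retract $E$ to a $\mathbb T^3$-bundle $\bar E$ over the equatorial $\mathbb S^2$, obtain $h_1=h_4$ from Poincar\'e duality on the compact orientable total space, and compute $h_1$ from the spectral sequence of the fibration over $\mathbb S^2$ --- the paper's Wang sequence is exactly the Serre spectral sequence in this situation, with its undetermined boundary map $H_0(\mathbb T^3)\to H_1(\mathbb T^3)$ playing the role of your $d_2=\iota_c$. Your identification of the differential as contraction with the Euler class sharpens the paper's interval $[2,3]$ to the explicit dichotomy $c=0$ versus $c\neq 0$, but the underlying method is the same.
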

\begin{proof}
	$E$ must be a $\mathbb T^3$ bundle over $U\simeq\mathbb S^2$.  
	Thus $E$ retracts to a $\mathbb T^3$ bundle $\bar E$ over $\mathbb S^2$. Since $\bar E$ is compact, we have the duality $h_1(\bar E) =h_4(\bar E)$.
	Part of the Wang sequence is
	$$\to H_{2-2}(\mathbb T^3)\to H_1(\mathbb T^3)\to H_1(\bar E)\to 0.$$
	Thus $h_1(E) = h_1(\bar E)\in h_1(\mathbb T^3) - \big[0,h_0(\mathbb T^3)\big] = 3 - [0,1] = [2,3]$.
\end{proof}

We can now work with the Meyer-Vietoris sequence with respect to the decomposition $M = A\cup B = D_i\cup D_j$. Since $A\cap B = E$, this sequence is 
$$H_\bullet(E)\to H_\bullet(D_i)\oplus H_\bullet(D_j)\to H_\bullet(M)\xrightarrow{[-1]}.$$
We are now ready to prove the main result of this section:

\begin{proof}[Proof of corollary \ref{corNumOrbits}]
	The Meyer-Vietoris sequence at $\bullet = 1$ gives $$h_1(D_i)+h_1(D_j)\leq h_1(E)+h_1(M)\leq 3,$$ where the second inequality uses the previous lemma and Meyer's theorem. But by lemma \ref{lemDiHomlogy}, $h_1(D_i)=3-i$ or $i\in\{0,1,2\}$. In particular, for $k=i+j<3$, we have $h_1(D_i)+h_1(D_j) = 6-(i+j) > 3$, contradicting our upper bound.
	
	For $k=3$, choose $A\cong D_1$ and $B\cong D_2$. Since $h_1(M)=0$, the Meyer-Vietoris sequence at $\bullet = 1$ gives $h_1(E)\geq h_1(D_1)+h_1(D_2) = 3$. Combining this with the previous lemma gives $h_1(E)=h_4(E) = 3$. Since $h_5(M)=h_1(M)=0$, the Meyer-Vietoris sequence at $\bullet = 4$ gives the contradiction $$3 = h_4(E) \leq h_4(D_1)+h_4(D_2) = 1.$$
\end{proof}

\section{Radial solutions}\label{secRadial}
In this section, we study solutions of the form $\varphi(\mu) = x(t)$, where $t  = \tfrac12\|\mu\|^2$ is a radial coodinate, and $\|\cdot\|$ is the Euclidean metric on $\Lambda^2\mathfrak t^*$. These were studied in \cite{moroianu2018toric}, where they show that the nearly toric equation simplifies to the ODE $$ 0 = \mathcal D(x):=3(x'^2-2t)(x'+2tx'')-8(x-2tx')$$ subject to the constraint
\begin{align}\label{eqnRadCon}
	x>2tx'>2t\sqrt{2t},
\end{align}
 where the derivatives are taken with respect to $t$. The main result is that such a radial solution cannot be complete:
 
\begin{theorem}\label{thmNoRadial}
	If $(M,\omega,\psi,\mathbb T)$ is a connected complete toric nearly K\"ahler $6$-manifold corresponding to a solution $\varphi$ to the toric nearly K\"ahler equation, then $\varphi$ is not radially symmetric.
\end{theorem}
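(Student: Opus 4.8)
The plan is to derive a contradiction with the orbit count of Corollary~\ref{corNumOrbits} by showing that, for a radial potential, the set of non-free orbits is forced to be either empty or an entire $2$-sphere, never a finite nonempty set. So suppose $M$ is complete and $\varphi(\mu)=x(t)$ with $t=\tfrac12\|\mu\|^2$. The first step is to record the two nonnegative quantities of \eqref{eqnToricSU3} as functions of $t$ alone. Since $\partial_r$ acts on functions of $t$ as $2t\,\tfrac{d}{dt}$, Theorem~\ref{thmInTermsOfPhi} gives $\varepsilon^2=\tfrac83(x-2tx')$ and $C(V,V)=2t(x'+2tx'')$. By \eqref{eqnToricSU3} a $\mathbb T$-orbit fails to be free exactly where $\det C=0$, and since $\det C=\varepsilon^2+C(V,V)$ is a sum of nonnegative terms, this happens precisely where $\varepsilon^2=0$ and $C(V,V)=0$ simultaneously.

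The key observation is that both conditions depend only on $t$. By Theorem~\ref{thmMain} the non-free orbits lie in $\ker\varepsilon$, and for a radial solution $\ker\varepsilon$ is the single level sphere $\{t=t_0\}$; here $t_0>0$, because the center $t=0$ (where $\mu=0$) carries the two Lagrangian orbits, on which $\varepsilon\neq 0$ and hence $x(0)>0$. On the whole boundary sphere $\{t=t_0\}$ the function $C(V,V)=2t_0\big(x'(t_0)+2t_0x''(t_0)\big)$ is constant. There are now two cases. If this constant vanishes, then $\det C=\varepsilon^2+C(V,V)=0$ at every point of the $2$-sphere's worth of orbits in $\ker\varepsilon$, so the non-free set is not discrete, contradicting Corollary~\ref{corSingDisc}. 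If instead the constant is positive, then $\det C=C(V,V)>0$ everywhere on $\ker\varepsilon$, so there are no non-free orbits at all, contradicting Corollary~\ref{corNumOrbits}. Either way we obtain a contradiction, proving that no complete radial solution exists.

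I expect the main difficulty to be careful justification rather than a new idea: one must verify via Theorem~\ref{thmMain} that the entire boundary sphere, and not merely finitely many orbits, belongs to $\ker\varepsilon$, and that $C(V,V)$ is genuinely constant there (which rests on $V$ pointing in the radial $\mu$-direction, so that $C=\Hess x$ contributes only its radial eigenvalue $x'+2tx''$). A complementary and more quantitative route analyzes the ODE $\mathcal D(x)=0$ near the endpoint where $\varepsilon\to 0$: as the constraint \eqref{eqnRadCon} degenerates there, one can locate precisely where completeness fails and confirm that the resulting singularity is pinned at a Lagrangian orbit rather than producing a smooth non-free orbit; but the orbit-counting argument above is the shortest path to the stated conclusion.
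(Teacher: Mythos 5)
Your argument is correct and takes essentially the same route as the paper: radial symmetry forces the non-free locus $S$ to be rotation-invariant, hence either empty or the entire boundary sphere $\mu^{-1}(\partial\Delta)$, and the two cases are ruled out by corollary \ref{corSingDisc} (discreteness) and by the non-freeness result (the paper cites proposition \ref{propNotFree}; your corollary \ref{corNumOrbits} is a stronger form of the same obstruction). Your explicit dichotomy via $\det C=\varepsilon^2+C(V,V)$ with $C(V,V)=2t_0\big(x'(t_0)+2t_0x''(t_0)\big)$ constant on $\{t=t_0\}$ just makes concrete the paper's one-line appeal to ``by radial symmetry, $\mu(S)$ must be either empty or all of $\partial\Delta$.''
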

\begin{proof}
	Assume that $\varphi$ is radially symmetric. Combining this symmetry with theorem \ref{thmSphereImage}, $\mu(M)$ must be a closed $3$-disc $\Delta$ centred at the origin. Now consider the set of points $S$ in $M$ where the torus action is not free. By corollary \ref{corSingDisc}, $\mu(S)$ is a discrete set of points in $\partial\Delta$. But by radially symmetry, $\mu(S)$ must be either empty or all of $\partial\Delta$. But $\mu(S)$ is a discrete set, so it can't be $\partial\Delta$. Thus $\mu(S)$, and hence $S$ is empty. This contradicts proposition \ref{propNotFree}.
\end{proof} 

We now investigate what goes wrong with the ODE to prevent completeness. Local existence of solutions to ODE's will give a local solution $x(t)$ to $\mathcal D(x)=0$ near any prescribed initial $1$-jet $\big(t_0,x(t_0),x'(t_0)\big)$ satisfying the constraints (\ref{eqnRadCon}). Let $(t_-,t_+)$ be the maximal open interval on which the solution can be extended while satisfying the constraints. 

$t_\pm$ must be either a point where $x(t)$ blows up or a boundary point of the constraints. By lemma \ref{lemEpsDomVol}, $\varepsilon^2 = \tfrac 83(x-2tx')>0$ implies the other constraint $x'>\sqrt{2t}$. Thus the boundary condition is simply $\varepsilon^2=0$.

\begin{lemma}\label{lemBigRadLim}
	$x(t)$ does not blow up at $t_+ < \infty$.
\end{lemma}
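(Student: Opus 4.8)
The plan is to prove that both $x$ and $x'$ stay bounded as $t \to t_+^-$, so that the solution curve $\big(t, x(t), x'(t)\big)$ remains in a compact region of phase space. Since $t_+$ is then not a point where the solution escapes to infinity, the dichotomy recorded above forces $t_+$ to be a boundary point of the constraints, i.e.\ a point where $\varepsilon^2 = 0$. The engine of the argument is to track the single quantity $w := x - 2tx'$, which by theorem \ref{thmInTermsOfPhi} equals $\tfrac38\varepsilon^2$ and is therefore strictly positive throughout the constraint region.

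First I would rewrite $\mathcal D(x) = 0$ in terms of $w$. Differentiating gives $w' = -(x' + 2tx'')$, so the defining equation $3(x'^2 - 2t)(x' + 2tx'') = 8(x - 2tx')$ becomes the first-order relation
\[ 3(x'^2 - 2t)\,w' = -8w. \]
On the constraint region we have $w > 0$ and, by lemma \ref{lemEpsDomVol}, $x'^2 - 2t > 0$; hence $w' < 0$. Thus $w$ is strictly decreasing on $[t_0, t_+)$ and obeys $0 < w(t) \le w(t_0)$, so $w$ is bounded (this is the radial incarnation of $\partial_r\varepsilon^2 \le 0$).

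Next I would bound $p := x'$. Solving the relation above for $x''$ and using $w = x - 2tp$ yields
\[ 2t\,p' = -p + \frac{8w}{3(p^2 - 2t)}. \]
Because $t \le t_+ < \infty$ and $0 < w \le w(t_0)$, the fraction is small whenever $p$ is large: choosing $P$ with $P^2 - 2t_+ \ge 1$ and $P > \tfrac83 w(t_0)$, any time $p(t) \ge P$ we have $p^2 - 2t \ge 1$, so the fraction is at most $\tfrac83 w(t_0) < p$ and hence $p' < 0$. A standard barrier argument then gives $p(t) \le \max\{p(t_0), P\}$ for all $t \in [t_0, t_+)$, while the constraint $p > \sqrt{2t} > 0$ bounds $p$ from below. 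Finally $x = w + 2tp$ is bounded, since $w$, $t$ and $p$ all are, which shows that no blow-up occurs.

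The main obstacle is that the equation is nonlinear and its right-hand side carries the factor $(p^2 - 2t)^{-1}$, so a priori the derivatives could blow up even at a $t_+$ where $x$ itself is controlled. The resolution is the change of viewpoint to $w \propto \varepsilon^2$, which reduces the equation to a relation linear in $w$ and thereby exposes monotonicity, together with the observation that the $-p$ damping term dominates for large $p$. Vanishing of the denominator can only happen as $w \to 0$, that is, exactly at the constraint boundary, and so never produces a genuine blow-up; this is why the lemma lets us conclude in section \ref{secRadial} that the radial singularity forms precisely where $\varepsilon$ vanishes.
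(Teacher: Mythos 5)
Your proof is correct, but it takes a genuinely different route from the paper's. The paper bounds $x$ in one line: the constraint $\varepsilon^2 \propto x - 2tx' > 0$ is rewritten as the differential inequality $(\log x)' < \tfrac{1}{2t}$, which integrates to the explicit a priori bound $x(t) < x_0\sqrt{t/t_0}$, so $x$ is bounded on any finite interval. Note that once $x$ is bounded, boundedness of $x'$ on $[t_0,t_+)$ is automatic from the constraints themselves, since $0 < \sqrt{2t} < x' < \tfrac{x}{2t}$; your barrier argument for $p = x'$, while valid, is therefore more machinery than needed. Your route instead extracts monotonicity of $w = \tfrac38\varepsilon^2$ from the rewritten equation $3(x'^2-2t)\,w' = -8w$ --- which is exactly the identity $(\varepsilon^2)' = -\tfrac83\,\varepsilon^2/(x'^2-2t)$ that the paper records just after this lemma --- and then runs a damping/barrier argument on $p$. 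What your approach buys is explicit control of the full $1$-jet $\big(t,x(t),x'(t)\big)$ in a compact set, which makes transparent why $t_+$ must be a constraint boundary point ($\varepsilon^2 \to 0$) rather than an escape time; what the paper's approach buys is brevity and a bound $x < x_0\sqrt{t/t_0}$ valid uniformly on the whole maximal interval, not just up to a prescribed finite $t_+$.

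One caveat: the paper's proof of this lemma also establishes that $t_+$ is in fact finite, by integrating the other constraint $x' > \sqrt{2t}$ to get $x - x_0 > \tfrac13\big((2t)^{3/2} - (2t_0)^{3/2}\big)$, a lower bound which eventually exceeds the upper bound $x_0\sqrt{t/t_0}$, so the constraints cannot persist for all time. Your argument treats $t_+ < \infty$ as a hypothesis --- your barrier constant $P$ is chosen using $t_+$ --- and so does not recover this. If the lemma is read as also asserting finiteness of $t_+$ (the paper's proof does prove it, and the later conclusion that $\varepsilon$ vanishes at some finite radius $r_0$ relies on it), you would need to add the paper's growth comparison; your own estimates do not directly yield a $t$-uniform upper bound on $x$ when $t_+ = \infty$, so at that point you would have to reintroduce the integration of $(\log x)' < \tfrac{1}{2t}$ in any case.
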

\begin{proof}
	First note that $\varepsilon^2>0$ implies that $(\log x)'<\frac 1{2t}$. Integrating this implies that $x(t) < x_0\sqrt{\tfrac t{t_0}}$. Since $x(t)$ is also positive, it cannot blow up in finite time.
	
	On the other hand, integrating $x'>\sqrt{2t}$  gives $x-x_0 > \frac{\sqrt{2t}^3-\sqrt{2t_0}^3}3$. This lower bound for $x$ grows faster as $t$ increases than the upper bound for $x$ in the previous paragraph. Thus $t_+$ is finite.
\end{proof}

We compute
$$\varepsilon^2 = \tfrac 83 (x-2t x'), \qquad 2tx'' = \frac{\varepsilon^2}{x'^2-2t}-x',
\qquad (\varepsilon^2)' = -\frac83\frac{\varepsilon^2}{x'^2-2t}.$$
Note that by lemma \ref{lemEpsDomVol}, the constraints can be rewritten as $0<\varepsilon^2\propto x-2tx'$.

Thus the constraints imply that $x'^2-2t>0$, so the ODE is regular when the constraints hold and $t>0$.

\begin{lemma}
	$t_-=0$.
\end{lemma}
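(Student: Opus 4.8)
The plan is to argue by contradiction: assume $t_->0$ and show the solution can then be extended below $t_-$, contradicting maximality of the interval. As already noted, the ODE $\mathcal D(x)=0$ is regular wherever $t>0$ and the constraints hold, so as $t\downarrow t_-$ the maximal solution can only fail to continue if either the $1$-jet $(x,x')$ escapes every compact set or the solution limits onto the boundary of the constraint region. I would rule out each alternative and conclude that the sole remaining possibility is the genuinely singular point $t=0$ of the equation.

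First I would control the state. Since $\varepsilon^2=\tfrac83(x-2tx')>0$ forces $x'<x/(2t)$, and since (by Lemma~\ref{lemEpsDomVol}) it also forces $x'>\sqrt{2t}>0$, the function $x$ is strictly increasing in $t$; hence on $(t_-,t_0]$ it stays trapped in $(0,x(t_0)]$ and cannot blow up, while $x'$ is squeezed into the compact interval $[\sqrt{2t},\,x(t_0)/(2t_-)]$ as long as $t_->0$. Thus if $t_->0$ the $1$-jet remains in a compact subset of the $(x,x')$-plane.

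Next I would show the constraint boundary is never approached from below. The computed identity $(\varepsilon^2)'=-\tfrac83\,\varepsilon^2/(x'^2-2t)$ is strictly negative throughout the constraint region, so $\varepsilon^2$ is strictly decreasing in $t$; as $t$ decreases it therefore increases and remains $\geq\varepsilon^2(t_0)>0$. Consequently $\varepsilon^2$ has a positive limit at $t_-$, and since $\{\varepsilon^2=0\}$ is (by Lemma~\ref{lemEpsDomVol}) the only constraint boundary, the solution does not reach it.

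The main obstacle is the one remaining scenario, in which the equation itself degenerates at a positive $t_-$ because $x'^2-2t\to0$ — equivalently, the radial eigenvalue $x'+2tx''$ of $\Hess\varphi$ blows up. Here I would work from the equivalent form $(x'^2-2t)(x'+2tx'')=\varepsilon^2$ of $\mathcal D=0$: since $\Hess\varphi$ is positive definite on the constraint region its eigenvalue $x'+2tx''$ is positive, so $x'^2-2t$ inherits the sign of $\varepsilon^2$ and cannot vanish at any interior point, and a limiting degeneration $x'^2-2t\to0$ with $\varepsilon^2$ bounded below would force $x'+2tx''\to+\infty$, i.e.\ $\det\Hess\varphi=\varepsilon^2+C(V,V)\to+\infty$. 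Excluding this blow-up of the orbit metric is where I expect the real work to lie; the natural route is an a priori bound on $\det\Hess\varphi$ (equivalently on $C(V,V)$) over $[t_-,t_0]$, which together with the bounds on $x'$ already in hand keeps $x'^2-2t=\varepsilon^2/(x'+2tx'')$ bounded away from $0$. Once the degeneration is excluded, the ODE stays regular on all of $(t_-,t_0]$ with the state confined to a compact subset of the domain, so the standard extension theorem continues the solution below any positive $t_-$; hence $t_-=0$.
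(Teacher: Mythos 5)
Your skeleton is the same trichotomy the paper uses at $t_-$ --- blow-up of the jet, reaching the constraint boundary, or a singular point of the equation --- and the two cases you close are closed essentially as in the paper: $x$ is positive and increasing, hence bounded on $(t_-,t_0]$, with $x'$ trapped between $\sqrt{2t}$ and $x/(2t)$ when $t_->0$; and $(\varepsilon^2)'=-\tfrac83\,\varepsilon^2/(x'^2-2t)<0$ shows $\varepsilon^2$ increases as $t$ decreases, so $\{\varepsilon^2=0\}$ is never approached. The divergence is in the third case. The paper disposes of the locus $\{x'^2=2t\}$ in one line just before the lemma, invoking Lemma \ref{lemEpsDomVol} to assert that $\varepsilon^2>0$ already implies $x'>\sqrt{2t}$, so that ``the boundary condition is simply $\varepsilon^2=0$'' and the only singular point of the ODE is $t=0$. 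You instead isolate the possibility $x'^2-2t\to0$ at $t_-$ as the crux and defer it to an unproved ``a priori bound on $\det\Hess\varphi$''. That deferral is a genuine gap, not a routine verification: everything else in your argument is soft, and the entire content of the lemma sits in exactly this case, which your proposal does not prove.

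Worse, the bound you hope for cannot be extracted from the ODE and the constraints alone, so the route you sketch would fail without new input. Set $F:=x-2tx'$ (so $\varepsilon^2=\tfrac83F$) and $u:=x'^2-2t$, and use $u$ as the independent variable: from $(x'^2-2t)(x'+2tx'')=\varepsilon^2$ one computes $u'=\bigl(8x'F-3u(x'^2+2t)\bigr)/(3tu)$, hence $dt/du=3tu/\bigl(8x'F-3u(x'^2+2t)\bigr)$, and similarly for $dx'/du$ and $dF/du$; these right-hand sides are smooth at $u=0$ whenever $x'F\neq0$. So through any data $(t_-,\sqrt{2t_-},F_-)$ with $t_-,F_->0$ there passes a solution on some $(t_-,t_-+\delta)$ satisfying \emph{both} constraints, along which $u\sim\sqrt{c(t-t_-)}\downarrow0$, $x'+2tx''=\varepsilon^2/u\to\infty$ (so $\det\Hess\varphi\to\infty$), while $\varepsilon^2$ tends to a positive limit because $(\log\varepsilon^2)'=-\tfrac8{3u}$ remains integrable. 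The degenerate termination is thus fully consistent with the constraints and with all the bounds you established, so no compactness/extension argument can exclude it; your trapped-jet conclusion (``the $1$-jet remains in a compact subset'') is true but insufficient, because the compact set can touch the singular locus $\{x'^2=2t\}$ of the equation. Closing your case would require promoting the pointwise mechanism of Lemma \ref{lemEpsDomVol} (which needs second derivatives, hence applies only at interior points where the solution exists) to a statement about limits at $t_-$, or some other genuinely new input --- note that the paper itself is terse on precisely this point.
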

\begin{proof}
	Since $t_-$ is the boundary point of a maximal domain of an ODE subject to the constraint $\varepsilon^2>0$, at $t_-$ either the ODE is singular, the solution $x(t)$ becomes unbounded, or $\varepsilon^2$ vanishes.
	Since the ODE is singular at $t=0$, we must have $t_-\geq 0$. Since $x$ is positive and increasing, it must be bounded in $(t_-,t_+)$. Since $\varepsilon^2$ is decreasing, it cannot vanish at $t_-$. Thus $t_-$ must a singular point of the ODE, in particular the only one: $0$.
\end{proof}

By theorem \ref{thmNoRadial}, there must be some singularity for $x(t)$ in $[0,t_+]$, and by the previous two lemmas it must be at $t=0$.

Note that the estimate in lemma \ref{lemBigRadLim} doesn't essentially require radial symmetry: it only uses $\varepsilon^2\propto \varphi-\partial_r\varphi>0$. In particular, continuing the discussion following lemma \ref{lemEpsDomVol}, $\varphi$ should not become unbounded as one tries to extend solutions in radial directions away from the origin.

\section{Polynomial solutions}
In this section we will try to understand polynomial solutions to the toric nearly K\"ahler equation.
As described in \cite{moroianu2018toric}, the toric nearly K\"ahler structure on $\mathbb S^3\times\mathbb S^3$ corresponds to the solution of the toric nearly K\"ahler equation
\begin{align}\label{eqnCubicSoln}
	\varphi_0 := 3 + \sum_j \mu_j^2 + \frac1{\sqrt 3}\prod_j\mu_j,
\end{align}
where $\{\mu_j\}_{j=1}^3$ are coordinates on $\Lambda^2\mathfrak t^*$ induced by the multi-moment map $\mu$. We will prove theorem \ref{thmPolySol} by treating each degree of polynomial separately. First we will introduce some notation. If $E$ is an equation or expression, and $m$ is a monomial in $\mathbb R[\mu_1,\mu_2,\mu_3],$ then $[m]E$ and $(m)E$ will refer respectively to the coefficient of $m$ in $E$, and the part of $E$ which is a multiple of $m$. We will use $\nabla$ to denote the gradient in $\{\mu_j\}_{j=1}^3$ coordinates, and abuse notation by not distinguishing it from its transpose, or a restricted gradient to an context-appropriate subset of the coordinates. Similarly $\nabla^2$ will denote the Hessian, where the set of coordinates may depend on context.

\begin{prop}
	Every cubic solution to the toric nearly K\"ahler equation is equivalent to $\varphi_0$ up to linear changes in coordinates.
\end{prop}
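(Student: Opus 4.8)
The plan is to expand a cubic solution into its homogeneous pieces $\varphi = f_0 + f_1 + f_2 + f_3$, with $f_k$ the part of degree $k$, and to read off \eqref{eqnNearlyToric} degree by degree. Since $\partial_r$ acts on a degree-$k$ homogeneous polynomial as multiplication by $k$, the operator $\tfrac83 - \tfrac{11}3\partial_r + \partial_r^2$ acts on $f_k$ by the scalar $\tfrac83 - \tfrac{11}3 k + k^2$, which equals $\tfrac83,\,0,\,-\tfrac23,\,\tfrac23$ for $k=0,1,2,3$. On the left, $\Hess\varphi = A + L$ where $A:=\Hess f_2$ is constant and $L:=\Hess f_3$ is linear in $\mu$, so $\det\Hess\varphi=\det(A+L)$ splits into homogeneous parts of degrees $0$ through $3$. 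Matching degrees yields four equations, and the crucial point is that the top one decouples completely:
\begin{equation*}
	\det\Hess f_3 = \tfrac23 f_3 .
\end{equation*}

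First I would solve this self-Hessian equation for the ternary cubic $f_3$; this is where I expect the real work to be. Since $\det\Hess f_3=\tfrac23 f_3$ forces the curve $\{f_3=0\}$ to coincide with its own Hessian curve, every smooth point of $\{f_3=0\}$ would have to be an inflection point. A smooth cubic meets its Hessian only in its nine flexes, so it cannot satisfy the equation; a direct computation in the nodal and cuspidal normal forms excludes the irreducible singular cubics, and the line-times-conic normal forms (transverse and tangent) exclude the remaining irreducible-conic case. Hence $f_3$ must be a product of three linear forms, and it cannot be a cone (three concurrent lines, a double line, or a triple line), since a cone has identically vanishing Hessian by Hesse's theorem. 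Thus, over $\mathbb C$, $f_3$ is equivalent to $\prod_j\mu_j$; over $\mathbb R$ there are two real forms, three real lines $\prod_j\mu_j$ and a real line times a conjugate pair $\mu_1(\mu_2^2+\mu_3^2)$, and one computes $\det\Hess\!\big(\prod_j\mu_j\big)=2\prod_j\mu_j$ whereas $\det\Hess\!\big(\mu_1(\mu_2^2+\mu_3^2)\big)=-8\,\mu_1(\mu_2^2+\mu_3^2)$. Because the sign of this proportionality constant is invariant under real coordinate changes and the equation demands the positive value $\tfrac23$, the cubic part is three real lines; fixing the scale gives $f_3=\tfrac1{\sqrt3}\prod_j\mu_j$, the cubic term of $\varphi_0$.

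With $f_3$ normalized, the remaining equations reduce to linear algebra. The degree-two equation is linear in $A=\Hess f_2$: writing its left side as $\tr(A\,\adj L)$ and comparing with $-\tfrac23 f_2=-\tfrac13\,\mu^{\top} A\mu$ forces every off-diagonal entry of $A$ to vanish, so $f_2=\tfrac12\sum_j a_{jj}\mu_j^2$. The degree-one equation $\tr(\adj A\cdot L)=0$ is then automatic because $L$ has vanishing diagonal, and the degree-zero equation $\det A=\tfrac83 f_0$ merely pins the constant term to $f_0=\tfrac38\,a_{11}a_{22}a_{33}$.

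To finish I would use the full symmetry of \eqref{eqnNearlyToric}. A change of basis of $\mathfrak t$ by an element of $\mathrm{GL}(\mathfrak t)$ acts on the coordinate $\mu\in\Lambda^2\mathfrak t^*$ and simultaneously rescales the $(\Lambda^3\mathfrak t^*)^{\otimes2}$-valued $\varphi$; in particular it contains the scaling $\varphi\mapsto\lambda^6\varphi(\lambda^{-2}\cdot)$, under which each $a_{jj}$ is multiplied by $\lambda^2$. Combining this with the diagonal torus and the permutations stabilizing $\prod_j\mu_j$ lets me rescale the three coefficients $a_{jj}$ by arbitrary positive factors; the positivity inherent in a genuine solution ($\varepsilon^2=\tfrac83(1-\partial_r)\varphi>0$ and $\Hess\varphi$ positive definite on a nonempty $U_0$, forcing $a_{jj}>0$) then permits normalization to $a_{11}=a_{22}=a_{33}=2$, giving $f_2=\sum_j\mu_j^2$ and $f_0=3$. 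Finally $f_1$ is pure gauge: it contributes nothing to $\Hess\varphi$ and, being annihilated by $1-\partial_r$, nothing to $\varepsilon^2$, so it may be set to zero. The outcome is exactly $\varphi_0$.
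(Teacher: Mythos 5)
Your proof is correct and follows essentially the same route as the paper: decompose into homogeneous parts, observe that the top-degree equation decouples to the self-Hessian condition $\det\Hess\varphi^3=\tfrac23\varphi^3$, conclude that $\varphi^3$ is a product of three non-concurrent linear forms (the paper simply cites this classical fact where you sketch its proof), and then normalize the quadratic and constant parts degree by degree, with the linear part pure gauge. If anything you are more careful than the paper at two points it glosses over: you rule out the real form $\mu_1(\mu_2^2+\mu_3^2)$ via the sign of the proportionality constant, and you justify the normalization $\Hess\varphi^2=2\Id$ using the $(\Lambda^3\mathfrak t^*)^{\otimes 2}$-scaling symmetry together with the positivity $a_{jj}>0$ forced by a nonempty $U_0$ --- a hypothesis that is genuinely needed, since otherwise $-3+\mu_1^2+\mu_2^2-\mu_3^2+\tfrac1{\sqrt3}\mu_1\mu_2\mu_3$ formally solves \eqref{eqnNearlyToric} but is not equivalent to $\varphi_0$ over $\mathbb R$.
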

\begin{proof}
	Let $\varphi$ be some cubic solution of the toric nearly K\"ahler equation \eqref{eqnNearlyToric}. Write $\varphi= \sum_{j=0}^3\varphi^k$, where each $\varphi^k$ is a degree $k$ homogeneous polynomial in $\{\mu_j\}_{j=1}^3$. As noted in \cite{moroianu2018toric}, $\varphi^1$ can be chosen to be $0$. The degree $3$ term of \eqref{eqnNearlyToric} gives
	$$|\nabla^2\varphi^3|
	= \left(\tfrac 83-\tfrac{11}3\partial_r+\partial_r^2\right)\varphi^3 
	= \left(\tfrac 83-{11}+9\right)\varphi^3 =\tfrac 23\varphi^3.$$
	Since $|\nabla^2\varphi^3|\propto\varphi^3$, the plane algebraic curve $V(\varphi^3)\subset \mathbb{CP}^2$ is a union of lines \cite{Steiger2008Inflexion}, so that $\varphi^3$ is a product of linear factors. We can choose coordinates along these lines so that $\varphi^3=\lambda\prod_{j=1}^3\mu_j$. We can compute $|\nabla^2\varphi^3|=2\lambda^2\varphi^3,$ so we must have $\lambda^2 = \tfrac 13$. Again by changing coordinates, we may assume that $\lambda = \tfrac 1{\sqrt 3}$.
	
	Writing $\varphi^2 = \varphi^2_d + \varphi^2_o$, where $\nabla^2\varphi^2_d$ is a diagonal matrix, while $\nabla^2\varphi^2_o$ vanishes on the diagonal, one can compute that the degree $2$ term of $|\nabla^2\varphi|$ is $\frac23(\varphi^2_o-\varphi^2_d)$.
	Thus the degree $2$ term of (\ref{eqnNearlyToric}) is 
	$$-\frac23\varphi^2 = \frac23(\varphi^2_o-\varphi_d),$$
	so that $\varphi_o=0$. Thus $\nabla^2\varphi^2$ is a diagonal matrix. We still have the freedom to scale the coordinates so that $\nabla^2\varphi^2=2\Id$, or equivalently $\varphi^2=\sum_{j=1}^3\mu_j^2.$ The degree $0$ term of \eqref{eqnNearlyToric} now gives $\tfrac83\varphi^0=|\nabla^2\varphi^2|=8$, so that $\varphi^0=3$. Thus $\varphi=\varphi_0$.
\end{proof}

For higher degree polynomial solutions, the toric nearly equation becomes overdetermined, since $|\nabla^2\varphi|$ is formally a polynomial of degree $\\3(\deg(\varphi)-2)$. This suggests that the cubic solution might be the only polynomial solution. By diagonalizing $\nabla^2\varphi^2$, we can always choose a basis so that $\varphi^2=\sum_{j=1}^3\mu_j^2$. As discussed in the previous proof, we will also get $\varphi^0=3$ and $\varphi^1=0$. The degree $1$ term of (\ref{eqnNearlyToric}) tells us that $\varphi^3$ is harmonic.

Our main tool will be the following theorem, which we will refer to as Hesse's theorem since Hesse originally claimed the result:
\begin{theorem}[\cite{lossen2004does}]
	If $f$ is a homogeneous polynomial in $4$ or less variables over an algebraically closed field, then $\det\circ\Hess(f)=0$ if and only if $f$ is independent of one of the variables after a suitable homogeneous coordinate change.
\end{theorem}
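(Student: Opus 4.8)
The statement is classical (it goes back to Gordan and Noether), and the plan is to reconstruct its proof by reading the Hessian condition as a geometric degeneracy of the gradient map. Write $f$ for a homogeneous form of degree $d$ in variables $x_0,\dots,x_{n-1}$ with $n\le 4$ over an algebraically closed field, and let $\Phi=\nabla f=[\partial_0 f:\dots:\partial_{n-1}f]\colon\mathbb P^{n-1}\dashrightarrow\mathbb P^{n-1}$ be the associated polar (gradient) map. One direction is immediate and should be dispatched first: if, after a linear change of coordinates, $f$ does not involve $x_{n-1}$, then $\Hess(f)$ has a vanishing last row and column, so $\det\Hess(f)\equiv 0$. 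The entire content is the converse, and I would attack it by exploiting the low number of variables.

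The first substantive step is to record that $\Hess(f)$ is, up to the usual projective bookkeeping via the Euler relation, the differential of $\Phi$; hence $\det\Hess(f)\equiv 0$ is equivalent to $\Phi$ being \emph{degenerate}, meaning its image $Z=\overline{\Phi(\mathbb P^{n-1})}$ has dimension $r\le n-2$ and a general fiber has dimension $n-1-r\ge 1$. This converts the algebraic hypothesis into a statement about the fibration of $\mathbb P^{n-1}$ by the fibers of the gradient map.

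The heart of the argument is the Gordan–Noether structure theorem, which I would establish along the classical lines by differentiating the Euler identities $\sum_i x_i\,\partial_i f = d\,f$ and $\sum_i x_i\,\partial_i(\partial_j f) = (d-1)\,\partial_j f$ and combining them with the everywhere rank-drop of $\Phi$. This shows that the closure of a general fiber of $\Phi$ is a \emph{linear} subspace $L\subset\mathbb P^{n-1}$ of dimension $n-1-r$, and — crucially — that $\nabla f$ is \emph{constant} along $L$. Thus $\mathbb P^{n-1}$ is swept out by linear fibers, along each of which the essential dependence of $f$ is frozen.

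The final step, and the main obstacle, is to upgrade this linear fibration to an honest reduction in the number of variables, and this is precisely where the hypothesis $n\le 4$ is indispensable. For $\mathbb P^{n-1}$ with $n\le 4$ one has $r\le 2$ and fibers of dimension $\ge 1$, and I would run the short case analysis on $r$: if $r=0$ the gradient has constant direction and $f=F(\ell)$ for a single linear form $\ell$; if $r=1$ the image is a curve with $2$-plane fibers; if $r=2$ the image is a surface with line fibers. In each case one uses that the fibers cover $\mathbb P^{n-1}$ while carrying constant $\nabla f$ to produce a linear form $\ell$ with $\partial_\ell f\equiv 0$, so that $f$ depends on $n-1$ coordinates after a linear change. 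The delicate point is excluding a ``twisted'' family whose fiber directions vary; in $\le 4$ variables a dimension count pins these directions into a fixed linear subspace, whereas for $n\ge 5$ this fails outright — the Perazzo/Gordan–Noether cubics in $\mathbb P^4$ have identically vanishing Hessian yet genuinely depend on all variables — so any correct proof must visibly invoke $n\le 4$ at exactly this juncture.
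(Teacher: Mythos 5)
The paper itself contains no proof of this theorem: it is imported wholesale from \cite{lossen2004does}, so your proposal has to be measured against the classical Gordan--Noether argument presented there. Your scaffolding matches it correctly in outline --- the easy direction, the translation of $\det\Hess(f)\equiv 0$ into degeneracy of the polar map $\Phi$, and the fact that the generic fibers of $\Phi$ are linear spaces along which the gradient is (projectively) frozen are all genuine features of the classical proof. But both substantive steps are asserted rather than proved, and the final one is asserted via a mechanism that provably cannot work.

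The decisive gap is the last step. You claim that for $n\le 4$ one ``uses that the fibers cover $\mathbb P^{n-1}$ while carrying constant $\nabla f$ to produce a linear form $\ell$ with $\partial_\ell f\equiv 0$,'' with twisted families excluded by ``a dimension count.'' However, Perazzo's cubic $f=xu^2+yuv+zv^2$ in $\mathbb P^4$ satisfies every hypothesis you invoke: its polar image is degenerate in dimension, its generic fibers are linear $\mathbb P^2$'s sweeping out $\mathbb P^4$ with projectively constant gradient --- and yet $f$ is not a cone. So no argument using only ``linear fibers covering projective space with frozen gradient'' can yield $\ell$, and your dimension count is not an argument but a placeholder standing exactly where the entire content of the theorem lives. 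What must actually be shown is that for $n\le 4$ \emph{algebraic} dependence of the partials forces \emph{linear} dependence, i.e.\ that the polar image, though of dimension $\le n-2$, lies in a hyperplane. In Gordan--Noether/Lossen this is done by taking a relation $\Pi$ of minimal degree with $\Pi(\partial_0 f,\dots,\partial_{n-1}f)=0$, observing that $h_i:=(\partial\Pi/\partial y_i)(\nabla f)$ gives a vector field in $\ker\Hess(f)$ with $\sum_i h_i\,\partial_i f=0$ by Euler's identity, and then proving through a delicate several-page case analysis that for $n\le 4$ the field $h$ may be replaced by a constant one; nothing in your sketch substitutes for this. Two smaller points: the linearity of the generic fibers is itself a nontrivial theorem, not a consequence of ``differentiating the Euler identities''; and the very first reduction ($\det\Hess(f)\equiv 0$ iff the partials are algebraically dependent) requires characteristic zero --- in characteristic $p$ it fails (e.g.\ $f=x^2$ in characteristic $2$ has vanishing Hessian but is no cone), so the statement as quoted, over a bare algebraically closed field, is already too generous, a defect your proposal inherits from the paper rather than introduces.
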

	We continue proving our theorem with the quartic case:

\begin{prop}
	There are no quartic solutions to the toric nearly K\"ahler equation.
\end{prop}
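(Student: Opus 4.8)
The plan is to push the degree-by-degree analysis one step further than the cubic case, exploiting the fact that the toric nearly Kähler equation becomes overdetermined for quartics. Writing $\varphi = \sum_{k=0}^4 \varphi^k$ with each $\varphi^k$ homogeneous of degree $k$, I would first record the normalizations already established in the discussion preceding the proposition: by diagonalizing and scaling we may take $\varphi^0 = 3$, $\varphi^1 = 0$, $\varphi^2 = \sum_j \mu_j^2$, and $\varphi^3$ harmonic, while the cubic proposition forces $\varphi^3 = \tfrac{1}{\sqrt3}\prod_j \mu_j$ (the unique harmonic normalized cubic, up to the residual coordinate symmetry). The unknown is then $\varphi^4$, and the strategy is to show it must vanish, contradicting $\deg\varphi = 4$.

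The key computation is to expand $\det\Hess\varphi = \bigl(\tfrac83 - \tfrac{11}3\partial_r + \partial_r^2\bigr)\varphi$ and isolate its top-degree component. Since $\Hess\varphi = \nabla^2\varphi^2 + \nabla^2\varphi^3 + \nabla^2\varphi^4 = 2\Id + \nabla^2\varphi^3 + \nabla^2\varphi^4$, the determinant is a polynomial whose highest-degree part, of degree $3\cdot 2 = 6$, is exactly $\det(\nabla^2\varphi^4)$. On the right-hand side the operator $\tfrac83 - \tfrac{11}3\partial_r + \partial_r^2$ acts on a degree-at-most-$4$ polynomial and so produces nothing in degree $6$. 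Hence the degree-$6$ part of the equation reads $\det(\nabla^2\varphi^4) = 0$. This is precisely the hypothesis of Hesse's theorem (here in three variables, comfortably within the ``$4$ or less'' range): a ternary quartic with vanishing Hessian determinant must, after a homogeneous linear coordinate change, be independent of one variable.

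From here I would argue that such degeneracy of $\varphi^4$ is incompatible with the lower-degree constraints. After the coordinate change making $\varphi^4$ depend on only two of the three variables, I must track how this change interacts with the already-normalized pieces $\varphi^2$ and $\varphi^3$; a clean route is to examine the next equation down, the degree-$5$ component of the toric nearly Kähler equation, which couples $\varphi^4$ to $\varphi^3$ through the cross terms of the determinant expansion (schematically the mixed $2\times 2$ minors pairing $\nabla^2\varphi^3$ with $\nabla^2\varphi^4$, together with $2\,\mathrm{tr}$-type terms), against the operator applied to $\varphi$, which again contributes nothing in degree $5$. Combining the degree-$5$ relation with the structural information that $\varphi^4$ is (in suitable coordinates) a binary quartic should force $\varphi^4 = 0$. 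The main obstacle I anticipate is bookkeeping: Hesse's theorem only supplies the coordinate change for the pure quartic, and I must verify that this change does not destroy the normalizations of $\varphi^2$ and $\varphi^3$, or else argue directly that the degree-$5$ and degree-$4$ components of the equation are overdetermined enough to kill $\varphi^4$ without invoking the normal form. Making that interaction precise — rather than the individual determinant expansions, which are routine — is where the real work lies.
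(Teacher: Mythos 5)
Your opening moves coincide with the paper's: the degree-$6$ component of \eqref{eqnNearlyToric} gives $\det\nabla^2\varphi^4=0$, and Hesse's theorem then lets you assume $\varphi^4=\varphi^4(x,y)$. But there is a genuine gap immediately afterwards: you cannot cite the cubic proposition to conclude $\varphi^3=\tfrac1{\sqrt3}\mu_1\mu_2\mu_3$. That proposition classifies solutions which \emph{are} cubic polynomials; for a quartic solution the degree-$3$ component of \eqref{eqnNearlyToric} is not the standalone cubic identity, because $\det(2\Id+\nabla^2\varphi^3+\nabla^2\varphi^4)$ contributes in degree $3$ not only $\det\nabla^2\varphi^3$ but also the mixed terms taking one entry from $2\Id$, one from $\nabla^2\varphi^3$ (degree $1$) and one from $\nabla^2\varphi^4$ (degree $2$). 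So $\varphi^3$ is coupled to $\varphi^4$ and is a priori unconstrained beyond harmonicity; note also that harmonicity alone is nowhere near enough to single out $xyz$ (harmonic cubics in three variables form a $7$-dimensional space), so ``the unique harmonic normalized cubic'' is not correct. Determining $\varphi^3$ in the presence of a nonzero $\varphi^4$ is in fact the bulk of the paper's proof, not a quotable preliminary.

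The endgame is likewise a hope rather than an argument. In the normalized coordinates the degree-$5$ component yields only the single relation $|\nabla^2_{xy}\varphi^4|\,\varphi^3_{zz}=0$, which by itself kills nothing; the paper must split into cases on it. Concretely: assuming $\varphi^3_{zz}\neq0$ forces $|\nabla^2_{xy}\varphi^4|=0$, a second application of Hesse's theorem, and a contradiction extracted from the degree-$4$ and degree-$3$ components after writing $\varphi^3=B^0x^3+B^1x^2+B^2x+B^3$; with $\varphi^3_{zz}=0$ established, one writes $\varphi^3=B^3(x,y)+zB^2(x,y)$, uses the $z^2$-part of \eqref{eqnNearlyToric} plus harmonicity to rotate $B^2$ into $xy/\sqrt3$, uses the $z$-part to get $\varphi^4_{xy}=B^3_{xy}=0$ (whence $B^3=0$, and only at this point $\varphi=\varphi_0+\varphi^4$), and finally the degree-$2$ component gives $\Delta\varphi^4=0$, which together with $\varphi^4_{xy}=0$ and $\varphi^4=\varphi^4(x,y)$ forces $\varphi^4=0$. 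Your instinct that the coordinate changes from Hesse's theorem must be checked against the earlier normalizations is sound --- the paper does track exactly this (e.g.\ observing that $z$ remains orthogonal to $x$ with respect to $\nabla^2\varphi^2$ after a change of variables) --- but as written your proposal replaces the actual work, namely the coupled analysis of $\varphi^3$ and $\varphi^4$ through degrees $5$, $4$, $3$ and $2$, with ``should force $\varphi^4=0$''.
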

\begin{proof}
Let $\varphi=\sum_{j=0}^4\varphi^j$ be a solution to \eqref{eqnNearlyToric}, where each $\varphi^j$ is a homogeneous polynomial of degree $j$. Assume that $\varphi^4\neq 0$, so that $\varphi$ is quartic. We can choose coordinates so that the quadratic part of $\varphi$ is $3+x^2+y^2+z^2$, where $(x,y,z)$ is a relabelling of the coordinates $(\mu_j)_{j=1}^3$.

The degree $6$ term of \eqref{eqnNearlyToric} is $|\nabla^2\varphi_4|=0$. By Hesse's theorem, this is equivalent to $\varphi_4$ being a function of two variables. Thus, we can choose coordinates so that $\varphi^4 = \varphi^4(x,y)$.

\begin{lemma}$\varphi^3_{zz}=0$.
\end{lemma}
\begin{proof}
Assume that $\varphi^3_{zz}\neq 0$. The degree $5$ term of \eqref{eqnNearlyToric} gives $|\nabla^2_{xy}\varphi^4|\varphi^3_{zz}=0$, so that $|\nabla^2_{xy}\varphi^4|=0$.
Using Hesse's theorem, we can choose coordinates so that $\varphi^4$ depends only on $x$. Now the degree $4$ term of \eqref{eqnNearlyToric} gives $$4\varphi^4=\varphi^4_{xx}|\nabla^2_{yz}\varphi^3|.$$
Write $\varphi^3 = B^0x^3 + B^1x^2+B^2x+B^3$, where each $B^j$ is a degree $j$ homogeneous polynomial in $y$ and $z$. The $x^2$ term of the above equation gives $0=|\nabla^2 B^3|$. By Hesse's theorem, we can choose coordinates so that $B^3\propto y^3$. Note that we still can assume that $\varphi^3_{zz}\neq 0$, since $z$ is still a coordinate orthogonal to $x$ with respect to $\nabla^2\varphi^2$. Since $\varphi^3_{zz}\neq0$, we must have $B^2_{zz}\neq 0$. The $x^3$ term of the above equation then gives $B^3_{yy}B^2_{zz}=0$, implying $B^3=0$. The $x^4$ term of the above equation gives $|\nabla^2B^2| = \tfrac 13$.

  Now $\Delta\varphi^3=0$ implies that $B^1=0$. Now the degree $3$ term of $|\nabla^2\varphi|$ modulo $x^3$ is $$|\nabla^2(xB^2)| = -2xB^2|\nabla^2 B^2| = -\tfrac 23 xB^2,$$
  where the first equality can be verified directly since $B^2$ is a homogeneous quadratic polynomial.
  Thus the coefficient of $x$ of the degree $3$ term of (\ref{eqnNearlyToric}) is $\tfrac 23 B^2 = -\tfrac23B^2$, so that $B^2=0$.
  This contradicts $|\nabla^2B^2| = \tfrac 13$.
\end{proof}
Since $\varphi^3_{zz}=0$, we can write $\varphi^3 = B^3(x,y) + zB^2(x,y).$ We have
$$\varphi = 3 + x^2+y^2+z^2 + B^3 + zB^2 + \varphi^4.$$
Since $\varphi^3$ is harmonic, so are $B^2$ and $B^3$.
$$\nabla^2\varphi = \begin{pmatrix}
	2\Id + \nabla^2(B_3+\varphi^4)	& \nabla B^2
\\	\nabla B^2	& 2
\end{pmatrix}+z\begin{pmatrix}
	\nabla^2 B^2	& 0
\\	0	& 0
\end{pmatrix}.$$
Now $[z^2]$\eqref{eqnNearlyToric} gives
$2|\nabla^2 B^2| =-\tfrac23.$
Combining this with $\Delta B^2=0$, we can rotate coordinates so that $B^2 = \tfrac{xy}{\sqrt 3}$.
Now $(z)$\eqref{eqnNearlyToric} gives
$$0 = \tfrac 4{\sqrt 3}(B^3+\varphi^4)_{xy}.$$
Thus $0 = \varphi^4_{xy} = B^3_{xy}$. Since $B^3$ is harmonic, it must vanish. Note that we now have $\varphi = \varphi_0 + \varphi^4.$ Now it is easy to see that the degree $2$ part of \eqref{eqnNearlyToric} gives $\Delta\varphi^4=0$, since it is the only term depending on $\varphi^4$. Combining this with $\varphi^4_{xy}=0$ shows that $\varphi^4=0$, a contradiction.
\end{proof}

Working quite a bit harder we can establish the quintic case:

\begin{prop}
	There are no quintic solutions to the toric nearly K\"ahler equation.
\end{prop}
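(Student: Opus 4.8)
The plan is to follow the same degree-by-degree strategy used in the quartic case, but now starting from the top degree $7$ term of \eqref{eqnNearlyToric} and working downward, using Hesse's theorem repeatedly to normalize the top homogeneous pieces. Writing $\varphi = \sum_{j=0}^5 \varphi^j$ with each $\varphi^j$ homogeneous of degree $j$, I would again fix the normalization $\varphi^0 = 3$, $\varphi^1 = 0$, $\varphi^2 = x^2+y^2+z^2$, and (from the degree $1$ term) harmonicity of $\varphi^3$. Assuming $\varphi^5 \neq 0$ for contradiction, the top degree $9$ term of \eqref{eqnNearlyToric} reads $\det \Hess(\varphi^5) = 0$, so by Hesse's theorem I may choose coordinates making $\varphi^5 = \varphi^5(x,y)$ a function of two variables. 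This kills the $zz$ entry of $\nabla^2\varphi^5$ and makes the Hessian block-structured, which is exactly the foothold that drove the quartic argument.

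Next I would descend through the intermediate degrees. The degree $8$ term of \eqref{eqnNearlyToric} will relate $|\nabla^2_{xy}\varphi^5|$ to $\varphi^4_{zz}$ (and possibly $\varphi^5$-cross terms); as in the quartic lemma, I expect a case split on whether $\varphi^4_{zz}$ (or the relevant leading entry) vanishes, with Hesse's theorem applied in each branch to further reduce the number of active variables in $\varphi^5$ and $\varphi^4$. The goal of these reductions is to force $\varphi^5$ to depend on a single variable, say $\varphi^5 = \varphi^5(x)$, after which the structure of $\nabla^2\varphi$ becomes a $2\Id$ block plus low-rank corrections in the remaining coordinates, and the determinant expands into tractable polynomial identities in the $B^j$ coefficient-polynomials (writing $\varphi^k$ as a polynomial in $x$ with coefficients homogeneous in $y,z$, as was done with $\varphi^3 = B^0x^3 + \cdots + B^3$). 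Extracting coefficients of the highest powers of the distinguished variable, combined with harmonicity of $\varphi^3$, should successively annihilate the $B^j$ pieces.

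The final stage mirrors the end of the quartic proof: once the top pieces are sufficiently normalized, I would show that $\varphi$ must reduce to $\varphi_0$ plus a residual high-degree correction, and then read off from the low-degree terms of \eqref{eqnNearlyToric} that this correction is forced to be harmonic and to have vanishing mixed partials, hence to vanish identically, contradicting $\varphi^5 \neq 0$. The essential mechanism throughout is that \eqref{eqnNearlyToric} is overdetermined in this degree range: there are far more coefficient equations than free coefficients once the top piece is pinned down by Hesse's theorem.

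The hard part will be the bookkeeping in the middle degrees. Unlike the quartic case, where a single vanishing second derivative $\varphi^3_{zz} = 0$ cleanly block-diagonalized the Hessian, here the Hessian receives contributions from $\varphi^5, \varphi^4$, and $\varphi^3$ simultaneously, so the determinant expansion produces many more cross terms and the case analysis (which partial derivatives vanish, in which coordinates Hesse's theorem can be applied without disturbing the earlier normalizations) branches considerably. Keeping the coordinate changes compatible — ensuring that each application of Hesse's theorem respects the already-fixed form $\varphi^2 = x^2+y^2+z^2$ and does not reintroduce eliminated terms — is the main obstacle, and is presumably why the authors describe this case as requiring one to work ``quite a bit harder.'' I expect the argument to rely heavily on computer algebra to manage the intermediate coefficient identities, rather than on any single conceptual insight beyond the repeated use of Hesse's theorem.
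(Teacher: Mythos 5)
Your opening is on target and matches the paper: the normalization $\varphi^0=3$, $\varphi^1=0$, $\varphi^2=x^2+y^2+z^2$, harmonicity of $\varphi^3$, and the degree $9$ term giving $\det\Hess(\varphi^5)=0$, whence by Hesse's theorem $\varphi^5=\varphi^5(x,y)$. (Minor slip: your first sentence says the top term has degree $7$; for a quintic it is $3(5-2)=9$, as you correctly state later.) But your roadmap for the middle degrees contains a genuine gap: you say the goal of the subsequent reductions is to force $\varphi^5$ to depend on a \emph{single} variable. That cannot be achieved, and nothing in the equation forces it. The degree $8$ term gives $0=\varphi^4_{zz}\,|\nabla^2_{xy}\varphi^5|$, and the branch $|\nabla^2_{xy}\varphi^5|=0$ (hence $\varphi^5$ one-variable by Hesse) is precisely the branch the paper assumes \emph{for contradiction} inside its lemma proving $\varphi^4_{zz}=0$. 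In the main line of the argument $\varphi^5$ remains a genuinely two-variable quintic all the way to the final step; what is successively killed is instead the $z$-dependence of the whole solution: first $\varphi^4_{zz}=0$, then $\varphi_{zzz}=0$ (so one may write $\varphi=\alpha+\beta z+\gamma z^2$ with $\alpha,\beta,\gamma$ polynomials in $x,y$ of degrees $5,3,1$), then $\gamma=1$, and then $\beta$ is normalized to $\tfrac{xy}{\sqrt3}$ after a delicate computation showing the coefficient $b$ of $\beta^3=bx^3$ vanishes. A plan that banks on reducing $\varphi^5$ to one variable stalls immediately, since the equations only ever constrain $\varphi^5$ through cross-terms with the lower pieces.

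Your endgame is also not the quartic one, and this is the second place your sketch would fail. After the reductions above, the problem does \emph{not} collapse to ``$\varphi=\varphi_0$ plus a harmonic correction with vanishing mixed partials'': harmonicity is available only for $\varphi^3$ (from the degree $1$ term), never for $\varphi^4$ or $\varphi^5$, and the residual problem for $\alpha$ is nonlinear. In the paper, $(z)$\eqref{eqnNearlyToric} gives $\alpha_{xy}=0$, the remainder reduces to a two-variable Monge--Amp\`ere-type identity containing the product $2\alpha_{xx}\alpha_{yy}$, the $(xy)$ coefficient yields the genuinely nonlinear dichotomy $\alpha_{xxx}\alpha_{yyy}=0$, and only after choosing (WLOG) $\alpha_{yyy}=0$ does a linear Euler-operator count give $-\tfrac{32}{3}\alpha^5=0$, contradicting $\varphi^5\neq 0$. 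So while your overall philosophy (degree-by-degree descent, repeated Hesse, overdeterminedness) is the right one, the two structural claims that carry your sketch --- one-variable reduction of $\varphi^5$ in the middle, and a quartic-style linear endgame --- are both incorrect as stated, and the actual proof must route around them.
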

\begin{proof}
	Let $\varphi=\sum_{j=0}^5\varphi^j$ be a solution to \eqref{eqnNearlyToric} with $\varphi^5\neq 0$.
	The degree $9$ term of \eqref{eqnNearlyToric} gives $|\nabla^2\varphi^5|=0$. Using Hesse's theorem, we can change variables so that $\varphi^5_z=0$. 
	\begin{lemma}
		$\varphi^4_{zz}=0$
	\end{lemma}
	\begin{proof}
		Assume $\varphi^4_{zz}\neq 0$. The degree $8$ term of \eqref{eqnNearlyToric} gives  $0=\varphi^4_{zz}|\nabla^2_{xy}\varphi^5|.$ Thus $|\nabla^2_{xy}\varphi^5|=0$. By Hesse's theorem, we may assume that $\varphi^5_y=0$. Write $\varphi^4 = \sum_j C^j x^{4-j}$, where each $C^j$ is a degree $j$ homogeneous polynomial in $y$ and $z$. The degree $7$ term of \eqref{eqnNearlyToric} gives 
		\begin{align*}
			0 =& |\nabla^2_{yz}\varphi^4|= |\nabla^2 C^4 + x\nabla^2 C^3 + x^2 \nabla^2 C^2|
			\\	=& |\nabla^2 C^4| + x\langle \nabla^2 C^4,\nabla^2 C^3\rangle 
			\\	&+ x^2\big(|\nabla^2 C^3| +\langle \nabla^2 C^4,\nabla^2 C^2\rangle\big)
			\\	&+ x^3 \langle \nabla^2 C^3,\nabla^2 C^2\rangle + x^4|\nabla^2 C^2|,
		\end{align*} 
		where $\langle N,M\rangle:= 2\adj(N)\cdot M = [t]|N+tM|.$
		If any $C^j$ vanishes, then we have $0=|\nabla^2 C^k|=|\nabla^2 C^\ell|=\langle\nabla^2 C^k,\nabla^2 C^\ell\rangle$, where $\{j,k,\ell\}=\{2,3,4\}$. This implies that $C^k$ and $C^\ell$ are both proportional to powers of the same linear term, which we may choose to be $y$ by changing coordinates. This contradicts $\varphi^4_{zz}\neq 0$.
		Thus no $C^j$ vanishes. Since $|\nabla^2C^2|=0$, we may use Hesse's theorem to choose coordinates so that $C^2\propto y^2$. Since $\langle \nabla^2 C^3,\nabla^2C^2\rangle=0$, we must have $C^3_{zz}=0$. Thus $C^4_{zz}=\varphi^4_{zz}\neq 0$. Combining this with $|\nabla^2C^4|=0$, we may choose coordinates so that $C^4\propto z^4$. Then $0=\langle\nabla^2 C^4,\nabla^2 C^3\rangle$ implies that $C^3_{yy}=0$. Thus  $C^3= 0$, a contradiction
	\end{proof}

\begin{lemma}$\varphi_{zzz}=0$.\end{lemma}
\begin{proof}
	Assume $\varphi_{zzz}\neq 0$. 
	Write $\varphi = \alpha + \beta z + \gamma z^2 + \delta z^3$, where $\alpha,\beta,\gamma,$ and $\delta$ are polynomials in $x$ and $y$ of degrees  $5,3,1,$ and $0$ respectively. Again, we will use exponents to denote the corresponding homogeneous parts.  We are assuming $\delta\neq 0$. 
	We have 
	$$\nabla^2\varphi = \begin{pmatrix}
		\nabla^2\alpha	& \nabla\beta
	\\	\nabla\beta	& 2\gamma
	\end{pmatrix}+z\begin{pmatrix}
		\nabla^2\beta	& 2\nabla\gamma
	\\	2\nabla\gamma		& 6\delta
	\end{pmatrix}.$$
	The degree $7$ term of \eqref{eqnNearlyToric}  with a factor of $z$ gives $|\nabla^2\alpha^5|=0$. The degree $6$ term  with a factor $z^2$ gives $\langle\nabla^2\alpha^5,\nabla^2\beta^3\rangle=0$. The degree $5$ term with a factor of $z^3$ gives $|\nabla^2\beta^3|=0$. Hesse's theorem allows us to interpret this as $\alpha^5$ and $\beta^3$ each depending on only one variable, which must be the same due to the cross-term. Thus we can assume that $\alpha^5$ and $\beta^3$ are both functions of $x$.
	
	The degree $6$ term with a factor of $z$ gives $0=\alpha^5_{xx}\alpha^4_{yy}\delta$, so that $\alpha^4_{yy}=0$. Thus we have $\nabla^2_{yz}\varphi^4=0$. Note that we now have no distinguished direction in the $y-z$ plane.
	
	We write $\varphi = A + B + C + D$, where $A,B,C,$ and $D$ have degree in $[y,z]$ respectively $0, 1, 2,$ and $3$ and total degrees respectively $5,4,3$ and $3$. The degree $5$ term of \eqref{eqnNearlyToric} with a factor of $x^3$ gives $0 = A^5_{xx}|\nabla^2_{yz} D^3|$. Since $A^5=\varphi^5\neq 0$, this shows that $|\nabla^2_{yz}D^3|=0$. Using Hesse's theorem, we can write $D^3$ as a function of $y$. This contradicts $\varphi_{zzz}\neq 0$.
\end{proof}

We will continue to use the decomposition with Greek letters. The previous lemma shows that $\delta=0$.

\begin{lemma}
	$\gamma = 1$.
\end{lemma}
\begin{proof}
 	Assume $\gamma\neq 1$, so that $\nabla\gamma\neq 0$. We can rotate the $x$-$y$ coordinates to write $\gamma=1 + c x$ for some $0\neq c\in\mathbb R$. Thus  $(z^3)$\eqref{eqnNearlyToric} gives $0=-\beta_{yy}(2c)^2$, so that $\beta_{yy}=0$.
	 We compute
	$$(z^2)|\nabla^2\varphi|=-4c^2\alpha_{yy}-2\gamma(\beta_{xy})^2+4c\beta_{xy}\beta_y
	=-4c^2\alpha_{yy}-2(\beta_{xy})^2+2c\beta_{xy}(2-\partial_r)\beta_y.$$
	Thus $(z^2y)$\eqref{eqnNearlyToric} gives $\alpha_{yyy}=0$. 
	Now 	$[z^2x^3]$\eqref{eqnNearlyToric} gives $\alpha_{yy}^5=0$.	
	We compute
	$$(yz)|\nabla^2\varphi| = 2\begin{vmatrix}
		0	& \alpha_{xyy}	& \beta_{xy}
	\\	\beta_{xy}	& \alpha_{yy}	& \beta_y
	\\	2c	&	\beta_y	&	2\gamma
	\end{vmatrix}.$$
	This is a polynomial in $x$, whose quartic term is proportional to $(\beta_{xxy}^3)^3$. Thus $[x^4yz]$\eqref{eqnNearlyToric} gives $\beta^3_y=0$.
	Now $[x^2z^2]$\eqref{eqnNearlyToric} gives $\alpha^4_{yy}=0$. Now the degree $7$ term of \eqref{eqnNearlyToric} gives $0=c(\alpha^5_{xy})^2$, so that $\alpha^5_y=0$.
	
	Now $[z^2]$\eqref{eqnNearlyToric} gives $\tfrac23 = \beta^2_{xy}+c^2$, so that $[xz^2]\eqref{eqnNearlyToric}$ gives $0=\alpha^3_{xyy}+c$. But $\varphi$ is harmonic, giving
	$$0 = \Delta\varphi^3 = \Delta\alpha^3 + 2c + z\Delta\beta^2 = \alpha^3_{xx}+z\beta^2_{xx}.$$
	
	Thus $\alpha^3_{xx}=0=\beta^2_{xx}$. Now $[x^3]$\eqref{eqnNearlyToric}  gives $\alpha^5=0$, contradicting $\varphi^5\neq 0$.
\end{proof}

Now that $\gamma=1$, $(z^2)$\eqref{eqnNearlyToric}  gives $2|\nabla^2\beta|=-\sfrac 23$. Using Hesse's theorem, we can change coordinates so that $\beta^3= bx^3$, for some $b\in\mathbb R$. Combining these equations with $\Delta\beta^2=0$ (from $\nabla\varphi^3=0$), we can choose coordinates so that $\beta^2=\frac{xy}{\sqrt 3}$, independent of whether or not $b$ vanishes. We will show that $b$ does indeed vanish. $(z)$\eqref{eqnNearlyToric} gives
\begin{align*}
	\frac{2xyz}{3\sqrt 3} + 4bzx^3 =& [z]|\nabla^2\varphi| = z\left(2\langle\nabla^2\alpha,\nabla^2\beta\rangle+\begin{vmatrix}
		\nabla^2\beta	& \nabla\beta
	\\	\nabla\beta		& 0
	\end{vmatrix}\right)
\\	=& z\left(12bx\alpha_{yy}-\frac4{\sqrt 3}\alpha_{xy}-6bx(\beta_y)^2+\frac2{\sqrt 3}\beta_x\beta_y\right)
\\	=& z\left(12bx\alpha_{yy}-\frac4{\sqrt 3}\alpha_{xy}-2bx^3+\frac23x\left(3bx^2+\frac y{\sqrt 3}\right)\right),
\end{align*} 
so that
$$0 = 12bx\alpha_{yy} - \frac4{\sqrt 3}\alpha_{xy}-4bx^3.$$
Taking the coefficients of $x^0$, $x^1$, and $x^2$ respectively of this equation gives
$$	[x^1]\alpha_y=0, 
\qquad [x^2]\alpha_y= \frac{3\sqrt 3}2b[x^0]\alpha_{yy},
\qquad [x^3]\alpha_y=\sqrt 3 b[x^1]\alpha_{yy}.
$$
In particular, $6\sqrt 3 b =\alpha^3_{xxy} =  - \alpha^3_{yyy},$ where the second inequality comes from $\alpha^3$ being harmonic.
Now we compute
$$	[x^0z^0]|\nabla^2\varphi| = [x^0]\begin{vmatrix}
	\alpha_{xx}	& 0	& \frac y{\sqrt 3}
\\	0	& \alpha_{yy}	& 0
\\	\frac y{\sqrt 3}	& 0	& 2
\end{vmatrix} = \alpha_{yy}\left(2\alpha_{xx}-\frac{y^2}3\right).
$$
In particular, $[y^2]$\eqref{eqnNearlyToric} gives
$$0 = [y^2]\alpha_{yy}\alpha_{xx}=\alpha^4_{yyyy}+\alpha_{yyy}\alpha_{xxy} +\alpha_{xxyy} = \alpha^4_{yyyy} - 162b^2.$$
Now $[x^3yz]$\eqref{eqnNearlyToric} gives 
$$0 = b\alpha_{xxyyy}\propto b^2\alpha^4_{yyyy}\propto b^4,$$
so that $b=0$. Now $[z]\eqref{eqnNearlyToric}$ is simply $\alpha_{xy}=0$. The remainder of \eqref{eqnNearlyToric} is
$$\left(\tfrac 83-\tfrac{11}3\partial_r+\partial_r^2\right)\alpha =  \begin{vmatrix}
	\alpha_{xx}	& 0	& \frac y{\sqrt 3}
\\	0	& \alpha_{yy}	& \frac x{\sqrt 3}
\\	\frac y{\sqrt 3}	& \frac x{\sqrt 3}	& 2
\end{vmatrix} = 2\alpha_{xx}\alpha_{yy} + \frac 13(\partial_r-\partial_r^2)\alpha.$$
Thus $(xy)$\eqref{eqnNearlyToric} gives $0=\alpha_{xxx}\alpha_{yyy}$. Without loss of generality, by changing coordinates we have $\alpha_{yyy}=0$. Thus \eqref{eqnNearlyToric} becomes
$$0 = \left(\tfrac 83-4\partial_r+\tfrac 43\partial_r^2\right)\alpha-4\alpha_{xx}.$$
In particular, $0 = -\frac{32}3\alpha^5$, contradicting $\varphi^5=\alpha^5\neq 0$.
\end{proof}

Combining the previous three propositions gives theorem \ref{thmPolySol}.

\bibliography{myBib}{}
\bibliographystyle{plain}
\end{document}